\tikzset{> =stealth}
\tikzset{normalHead/.tip={Triangle[open,angle=60:4pt]},}
\tikzset{normalTail/.tip={Triangle[reversed,open,angle=60:4pt]},}
\theoremstyle{plain}
\newtheorem{theorem}{Theorem}[section]
\newtheorem{lemma}[theorem]{Lemma}
\newtheorem{proposition}[theorem]{Proposition}
\theoremstyle{definition}
\newtheorem{definition}[theorem]{Definition}
\newtheorem{example}{Example}
\theoremstyle{remark}
\newtheorem{remark}{Remark}
\renewcommand{\epsilon}{\varepsilon}
\renewcommand{\phi}{\varphi}
\newcommand{\inv}{^{-1}}
\newcommand{\SWSExt}{\mathrm{CExt}}
\newcommand{\Eq}{\mathrm{Eq}}
\DeclareMathOperator{\Aut}{Aut}
\newcommand{\splitext}[6]{
\tikz[baseline]{
\newdimen{\mylabelwidth}
\newdimen{\skipwidth}
\node[anchor=base] (A) {\hspace*{\dimexpr0.5pt-\pgfkeysvalueof{/pgf/inner xsep}}${#1}$};
\settowidth{\mylabelwidth}{\pgfinterruptpicture {$#2$} \endpgfinterruptpicture}
\pgfmathsetlength{\skipwidth}{max(\mylabelwidth,10pt)}
\node[right] (B) at ([xshift=\skipwidth+12pt]A.east) {${#3}$};
\settowidth{\mylabelwidth}{\pgfinterruptpicture {$#4$} \endpgfinterruptpicture}
\settowidth{\skipwidth}{\pgfinterruptpicture {$#5$} \endpgfinterruptpicture}
\pgfmathsetlength{\skipwidth}{max(\skipwidth,\mylabelwidth,10pt)}
\node[right] (C) at ([xshift=\skipwidth+12pt]B.east) {${#6}$\hspace*{\dimexpr0.5pt-\pgfkeysvalueof{/pgf/inner xsep}}};
\draw[normalTail->] (A) to node [above] {${#2}$} (B);
\draw[transform canvas={yshift=0.5ex},-normalHead] (B) to node [above] {${#4}$} (C);
\draw[transform canvas={yshift=-0.5ex},->] (C) to node [below] {${#5}$} (B);
}}
\newcommand{\normalext}[5]{
\tikz[baseline]{
\newdimen{\mylabelwidth}
\newdimen{\skipwidth}
\node[anchor=base] (A) {\hspace*{\dimexpr0.5pt-\pgfkeysvalueof{/pgf/inner xsep}}${#1}$};
\settowidth{\mylabelwidth}{\pgfinterruptpicture {$#2$} \endpgfinterruptpicture}
\pgfmathsetlength{\skipwidth}{max(\mylabelwidth,12pt)}
\node[right] (B) at ([xshift=\skipwidth+10pt]A.east) {${#3}$};
\settowidth{\mylabelwidth}{\pgfinterruptpicture {$#4$} \endpgfinterruptpicture}
\pgfmathsetlength{\skipwidth}{max(\mylabelwidth,10pt)}
\node[right] (C) at ([xshift=\skipwidth+10pt]B.east) {${#5}$\hspace*{\dimexpr0.5pt-\pgfkeysvalueof{/pgf/inner xsep}}};
\draw[normalTail->] (A) to node [above] {${#2}$} (B);
\draw[-normalHead] (B) to node [above] {${#4}$} (C);
}}
\title{Baer sums for a natural class of monoid extensions}
\author[P. F. Faul]{Peter F. Faul}
\address{Department of Pure Mathematics and Statistical Sciences\\ University of Cambridge}
\email{peter@faul.io}
\date{\today}
\subjclass[2010]{20M50, 18G50.}
\keywords{cohomology, Artin gluing, protomodular, monoid extension} 
\begin{document}

\maketitle

\begin{abstract}
    It is well known that the set of isomorphism classes of extensions of groups with abelian kernel is characterized by the second cohomology group. In this paper we generalise this characterization of extensions to a natural class of extensions of monoids, the cosetal extensions. An extension $\normalext{N}{k}{G}{e}{H}$ is cosetal if for all $g,g' \in G$ in which $e(g) = e(g')$, there exists a (not necessarily unique) $n \in N$ such that $g = k(n)g'$. These extensions generalise the notion of special Schreier extensions, which are themselves examples of Schreier extensions. Just as in the group case where a semidirect product could be associated to each extension with abelian kernel, we show that to each cosetal extension (with abelian group) kernel, we can uniquely associate a weakly Schreier split extension. The characterization of weakly Schreier split extensions is combined with a suitable notion of a factor set to provide a cohomology group granting a full characterization of cosetal extensions, as well as supplying a Baer sum. 
\end{abstract}

\section{Introduction}\label{sec:Introduction}
\subsection*{Group cohomology}
The second cohomology group corresponding to group extensions with abelian kernels is a classical piece of mathematics. We associate to each such extension $\normalext{N}{k}{G}{e}{H}$ an action $\phi$ of $H$ on $N$. We do so by noting that, since $N$ is normal, it is closed under conjugation by $G$. This conjugation gives an action $\alpha\colon G \to \Aut(N)$ and since $N$ is abelian, $\alpha k$ is the zero morphism. As $e$ is the cokernel of $k$, we then get that $\alpha$ uniquely extends to a map $\phi \colon H \to \Aut(N)$ --- the desired action of $H$ on $N$.

We can then collect all isomorphism classes of extensions with the same action together in a set $\mathrm{Opext}(H,N,\phi)$ and show that this set is isomorphic in a natural way to the abelian group of factor sets quotiented by inner factor sets. This allows $\mathrm{Opext}(H,N,\phi)$ to inherit an abelian group structure called the Baer sum. For more on this, see \cite{maclane2012homology}.

\subsection*{Monoid cohomology}
Generalising this to the setting of extensions of monoids presents some difficulties. Notably, in the above we made crucial use of conjugation, which is not something available in the monoid setting.

Much work has been done to get around this problem. In \cite{redei1952verallgemeinerung}, Schreier extensions of monoids were introduced. An extension $\normalext{N}{k}{G}{e}{H}$ is Schreier if in each fibre $e^{-1}(h)$ there exists an element $u_h$ such that for all $g \in e^{-1}(h)$ there exists a unique $n \in N$ such that $g = k(n)u_h$. This means that the fibre $e^{-1}(h)$ is equal to the coset $Nu_h$.

Although closer to the structure of a group extension, this setting is not quite enough to adapt our original argument and extract an action. However, if an action is supplied --- that is, if Schreier extensions of a monoid $H$ by an $H$-module $N$ are considered --- then such extensions are classified by a cohomology group (as seen in \cite{tuen1976nonabelianextensions}). This is further generalised to cohomology groups for extensions of $H$ by $H$-semimodules in \cite{patchkoria1979schreier, patchkoria1977extensions}.

In \cite{martins2016baer}, a class of extensions are considered which have enough in common with the group setting that an action can be extracted from the extension itself. The idea behind these \emph{special Schreier} extensions is as follows.

An extension $\normalext{N}{k}{G}{e}{H}$ is special Schreier when the kernel equivalence split extension of $e$ is a Schreier split extension. Translating this into familiar terms, an extension is special Schreier if and only if for each $e(g) = e(g')$ there exists a unique element $n \in N$ such that $k(n)g' = g$. It is clear that special Schreier extensions are Schreier, but that the converse is not in general true.

To extract the action we observe that $e(g) = e(gk(n))$ and apply the special Schreier property, which says that there is a unique element $\alpha(g,n)$ such that $k\alpha(g,n) \cdot g = g \cdot k(n)$. Notice that if we were in the group setting we would have that $\alpha(g,n) = g \cdot k(n) \cdot g\inv$ and so this action generalises the one from the group case. This action then extends as before to one of $H$ on $N$.

The authors of \cite{martins2016baer} then consider isomorphism classes $\mathrm{SExt}(H,N,\phi)$ of extensions associated to the action $\phi$ and are able to classify these extensions using a cohomology group corresponding to a generalised notion of factor sets, and thus imbue $\mathrm{SExt}(H,N,\phi)$ with a Baer sum.

In \cite{faul2019characterization}, weakly Schreier split extensions, a generalization of Schreier split extensions, were characterized in a way that suggested the possibility of defining a cohomology derived from the analogous special weakly Schreier extensions. We will demonstrate that this approach succeeds and a coholomogy group can be associated to this class of extensions.

\subsection*{Outline}
In this paper we generalise the notion of a special Schreier extension, doing away with the uniqueness requirements. We call these extensions \emph{cosetal} because of their relation to cosets. Cosetal extensions are shown to be in one to one correspondance with extensions whose associated kernel equivalence split extension is weakly Schreier.

It is shown that analogous to the characterization of weakly Schreier split extensions in terms of an admissible quotient and a compatible action, such data can be uniquely associated to a cosetal extension.

We then consider isomorphism classes of extensions with the same associated data and characterize them using a cohomology group defined in terms of a natural weakening of factor sets in our setting. This naturally yields a Baer sum.
%

\section{Background}\label{sec:Background}
This paper makes extensive use of the characterization of \emph{weakly Schreier split extensions} in terms of admissible equivalence relations and compatible actions. Here we outline the basic results and motivation found in \cite{faul2019characterization}.

\begin{definition}
    A split extension $\splitext{N}{k}{G}{e}{s}{H}$ is \emph{weakly Schreier} if for each $g \in G$ there exists a (not necessarily unique) $n \in N$ such that $g = k(n)se(g)$.
\end{definition}

This generalises the notion of a \emph{Schreier split extension} which requires that for each $g$ there is a unique $n \in N$ such that $g = k(n)se(g)$.

Natural examples of weakly Schreier extensions are the Artin glueings of frames \cite{wraith1974glueing, faul2019artin} and Billhardt's \cite{billhardt1992wreath} $\lambda$-semidirect products of inverse monoids \cite{faul2020lambda}.

Given a weakly Schreier split extension $\splitext{N}{k}{G}{e}{s}{H}$, it is clear that the set map $f\colon N \times H \to G$ with $f(n,h) = k(n)s(h)$ is surjective. We can thus quotient $N \times H$ by the equivalence relation defined by $(n,h) \sim (n',h')$ if and only if $k(n)s(h) = k(n')s(h')$.

This equivalence relation will always satisfy the following four properties.
\begin{enumerate}
    \item $(n,1) \sim (n',1)$ implies $n = n'$,
    \item $(n,h) \sim (n',h')$ implies $h = h'$,
    \item $(n,h) \sim (n',h)$ implies that $(xn,h) \sim (xn',h)$ for all $x \in N$ and
    \item $(n,h) \sim (n',h)$ implies that $(n,hx) \sim (n',hx)$ for all $x \in H$.
\end{enumerate}
Any equivalence relation $E$ on $N \times H$ satisfying the above properties is called \emph{admissible}.

Similarly, given a weakly Schreier split extension $\splitext{N}{k}{G}{e}{s}{H}$, there exist maps $q\colon G \to N$ in which $g = kq(g)se(g)$ for all $g \in G$. Given such a map we can define a function $\alpha\colon H \times N \to N$ where $\alpha(h,n) = q(s(h)k(n))$.

This function $\alpha$ behaves like an action with respect to the associated admissible equivalence relation in the following way.
\begin{enumerate}
    \item $(n_1,h) \sim (n_2,h)$ implies $[n_1\alpha(h,n),h] = [n_2\alpha(h,n),h]$ for all $n \in N$,
    \item $(n,h') \sim (n',h')$ implies $[\alpha(h,n),hh'] = [\alpha(h,n'),hh']$ for all $h \in H$,
    \item $[\alpha(h,nn'),h] = [\alpha(h,n)\cdot\alpha(h,n'),h]$,
    \item $[\alpha(hh',n),hh'] = [\alpha(h,\alpha(h',n)),hh']$,
    \item $[\alpha(h,1),h] = [1,h]$,
    \item $[\alpha(1,n),1] = [n,1]$.
\end{enumerate}
Notice in particular the last four conditions which, in the first component, are just the usual identities satisfied by an action.

Any function satisfying the above identities with respect to an admissible equivalence relation $E$, we call a \emph{compatible action}.

Now if we assume that we have extracted an admissible equivalence relation $E$ and a compatible action $\alpha$ from a weakly Schreier split extension $\splitext{N}{k}{G}{e}{s}{H}$, we can equip the quotient $(N \times H)/E$ with a multiplication given by
\[
    [n,h][n',h'] = [n\alpha(h,n),hh'].
\]
The monoid $(N \times H)/E$ is isomorphic to $G$, where $[n,h]$ is sent to $k(n)s(h)$. In fact, we can construct a weakly Schreier extension $\splitext{N}{k'}{(N \times H)/E}{e'}{s'}{H}$ in which $k'(n) = [n,1]$, $e'([n,h]) = h$ and $s'(h) = [1,h]$. The isomorphism mentioned above is then an isomorphism of split extensions.

Furthermore, if we chose a different map $q \colon G \to N$ originally, the resulting compatible action would still give the same multiplication. This suggests that we identify compatible actions which give the same multiplication. This occurs precisely when $(\alpha(h,n),h) \sim (\alpha'(h,n),h)$ for all $n \in N$ and $h \in H$.

We can do this process in the other direction too. Starting with an admissible equivalence relation $E$ and a class of compatible actions $[\alpha]$, we can construct the associated weakly Schreier extension $\splitext{N}{k}{(N \times H)/E}{e}{s}{H}$ and from this extract the associated admissible equivalence relation $E'$ and class of compatible actions $[\alpha']$. Indeed, we find that $E = E'$ and $[\alpha] = [\alpha']$.

Thus, we have that weakly Schreier extensions are precisely characterized by admissible equivalence relations and compatible actions.

The final point worth emphasising is that the split short five lemma fails for weakly Schreier split extensions. Thus, there are morphisms of split extensions between weakly Schreier extensions which are not necessarily isomorphisms. It is the case however, that all such morphisms are unique and so the category of weakly Schreier extensions is a preorder. This then imbues the set of pairs $(E,[\alpha])$ of admissible equivalence relations and compatible actions with an order structure, where $(E,[\alpha]) \le (E',[\alpha'])$ if and only if $(\alpha(h,n),h) \sim_{E'} (\alpha'(h,n),h)$ and $(n,h) \sim_E (n',h)$ implies $(n,h) \sim_{E'} (n',h)$.

\section{Cosetal Extensions}
In this paper we consider a class of extensions we call cosetal extensions, which have much in common with extensions of groups, specifically pertaining to their relationship with cosets of the kernel.

\begin{definition}
    An extension $\normalext{N}{k}{G}{e}{H}$ is \emph{cosetal} if for all $g, g' \in G$ in which $e(g) = e(g')$, there exists an $n \in N$ such that $k(n)g' = g$.
\end{definition}

\begin{proposition}
    An extension $\normalext{N}{k}{G}{e}{H}$ is cosetal if and only if $Ng = Ng'$ whenever $e(g) = e(g')$. Furthermore in this case the monoid of cosets is isomorphic to $H$.
\end{proposition}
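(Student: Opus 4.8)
The plan is to dispatch the biconditional first and then build the monoid of cosets; throughout write $Ng$ for the set $\{k(n)g : n\in N\}\subseteq G$. For the forward implication of the biconditional, suppose the extension is cosetal and $e(g)=e(g')$; cosetality gives some $n\in N$ with $g=k(n)g'$, whence $k(m)g=k(m)k(n)g'=k(mn)g'\in Ng'$ for every $m\in N$, so $Ng\subseteq Ng'$, and by symmetry $Ng=Ng'$. For the converse, if $Ng=Ng'$ holds whenever $e(g)=e(g')$, then since $k$ is a monoid homomorphism we have $g=k(1_N)g\in Ng=Ng'$, which is exactly the defining condition of cosetality. So the biconditional is essentially immediate.

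For the ``furthermore'' I would first strengthen the biconditional. Because $k$ is the kernel of $e$, the composite $ek$ is constant at $1_H$, so $g=k(n)g'$ already forces $e(g)=e(g')$; combined with the previous paragraph this gives, for a cosetal extension, the equivalence $Ng=Ng'$ if and only if $e(g)=e(g')$. Hence the surjection $e$ and the assignment $g\mapsto Ng$ have exactly the same fibres, so $\overline e\colon Ng\mapsto e(g)$ is a well-defined bijection from the set of cosets onto $H$.

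Next I would equip the set of cosets with a monoid structure. The relation $g\sim g'\iff Ng=Ng'$ is a congruence on $G$, since $e(g)=e(g_1)$ and $e(g')=e(g_1')$ imply $e(gg')=e(g_1g_1')$; therefore $Ng\cdot Ng':=N(gg')$ is well defined, associative, and has unit $N=N\cdot 1_G$, and a short check shows that it coincides with the complex product $\{ab : a\in Ng,\ b\in Ng'\}$. Finally, $\overline e$ sends $N(gg')$ to $e(g)e(g')$, so it is an isomorphism of monoids onto $H$, as required.

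I do not expect a real obstacle here. The one place deserving care is the claim that the complex product of two cosets is a single coset rather than merely a subset of one: for $x=k(n)g\,k(m)g'$ one has $e(x)=e(g)e(g')=e(gg')$, so $Nx=N(gg')$ and $x=k(1_N)x\in N(gg')$ --- this is precisely where the full biconditional, not just its easy direction, gets used. The only other thing to watch is to invoke nothing about ``extension'' beyond what is genuinely available, namely that $e$ is surjective and that $ek$ is trivial.
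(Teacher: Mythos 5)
Your proof is correct and follows essentially the same route as the paper's: the biconditional is handled by the same short translation arguments (cosetality giving $g=k(n)g'$, and conversely $g=k(1_N)g\in Ng=Ng'$), and your map $Ng\mapsto e(g)$ is exactly the isomorphism the paper uses. The only difference is that you spell out the congruence property, the coset multiplication and its agreement with the complex product, which the paper dismisses as ``easily seen''.
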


\begin{proof}
    Suppose the extension $\normalext{N}{k}{G}{e}{H}$ is cosetal.
    
    Suppose $e(g) = e(g')$ and consider $x \in Ng$. Notice that $e(x) = e(g) = e(g')$ thus there exists an $n \in N$ such that $x = k(n)g'$. Thus $x \in Ng'$  and so $Ng \subseteq Ng'$. By a symmetric argument we get that $Ng' \subseteq Ng$, which gives the desired result.
    
    Let $\normalext{N}{k}{G}{e}{H}$ be an extension and suppose $Ng = Ng'$ whenever $e(g) = e(g')$.
    
    This means that $g \in Ng'$ which in turn means that there exists an $n \in N$ such that $g = k(n)g'$, giving us that the extension is cosetal.
    
    If $G/N$ is the monoid of cosets then the map sending $Ng$ to $e(g)$ can easily be seen to be an isomorphism.
\end{proof}

%
%
%
%
\begin{remark}
This seems to be a very natural concept and so I would not be surprised if it has already been defined in the literature already. I would be interested to know if this is the case.
\end{remark}

The following lemma follows immediately from the definition.

\begin{lemma}\label{cor:coset}
    Let $\normalext{N}{k}{G}{e}{H}$ be cosetal and let $s$ and $s'$ be (set-theoretic) sections of $e$. Then there exists a function $t\colon H \to N$ such that $s(h) = kt(h) \cdot s'(h)$ for all $h \in H$.
\end{lemma}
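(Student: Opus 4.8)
The statement is almost immediate from the cosetal property, so the plan is short: I would fix an arbitrary $h \in H$ and apply the cosetal condition to the pair $s(h)$ and $s'(h)$. Since $s$ and $s'$ are sections of $e$, we have $e(s(h)) = h = e(s'(h))$, so $e(s(h)) = e(s'(h))$, and the defining property of a cosetal extension yields some $n \in N$ with $k(n) s'(h) = s(h)$.

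\medskip

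The only real content is to package this choice as an honest function $t\colon H \to N$. For each $h$ the element $n$ above need not be unique, so I would invoke the axiom of choice (or simply "choose, for each $h$, one such $n$") to define $t(h)$ to be a selected witness. Then by construction $s(h) = k(t(h)) \cdot s'(h)$ for every $h \in H$, which is exactly the claim. No naturality or homomorphism property is asked of $t$, so nothing further is needed.

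\medskip

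There is essentially no obstacle here; the lemma is recorded only for later convenient reference. The one point worth a sentence in the write-up is the non-uniqueness of $n$, which is why $t$ is merely a set-theoretic function (mirroring the fact that the sections $s, s'$ themselves are only set-theoretic), and why no canonical choice is claimed. If one wished to avoid choice one could instead phrase the conclusion as "for each $h$ there exists $n_h \in N$ with $s(h) = k(n_h) s'(h)$", but stating it with a function $t$ is harmless and streamlines the applications that follow.
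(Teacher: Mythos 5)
Your proof is correct and is exactly the argument the paper has in mind (the paper simply states that the lemma ``follows immediately from the definition''): apply the cosetal condition to $s(h)$ and $s'(h)$, which satisfy $e(s(h)) = h = e(s'(h))$, and choose a witness for each $h$ to assemble $t$. The remark about non-uniqueness and the role of choice is a sensible point of care, though the paper does not dwell on it.
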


There is a connection between cosetal extensions and weakly Schreier extensions of monoids involving the kernel equivalence.

If $\normalext{N}{k}{G}{e}{H}$ is an extension, then the \emph{kernel equivalence split extension} of $e$ is the diagram
\[
    \splitext{N}{(k,0)}{\Eq(e)}{\pi_2}{(1_G,1_G)}{G}
\]
where $\Eq(e)$ is the monoid of all pairs $(g,g')$ in which $e(g) = e(g')$, $(k,0)(n) = (k(n),1)$, $\pi_2(g,g') = g'$ and $(1_G,1_G)(g) = (g,g)$.

\begin{proposition}
    An extension $\normalext{N}{k}{G}{e}{H}$ is cosetal if and only if the associated kernel equivalence split extension is weakly Schreier.
\end{proposition}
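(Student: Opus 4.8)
The plan is to unwind the definition of \emph{weakly Schreier} for the specific split extension in question and observe that it coincides with the cosetal condition; both implications then fall out simultaneously.

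Concretely, I would take the diagram $\splitext{N}{(k,0)}{\Eq(e)}{\pi_2}{(1_G,1_G)}{G}$ and write out what it means for it to be weakly Schreier: for every $(g_1, g_2) \in \Eq(e)$ there should exist $n \in N$ with $(g_1,g_2) = (k,0)(n)\cdot(1_G,1_G)\bigl(\pi_2(g_1,g_2)\bigr)$. The right-hand side simplifies: $\pi_2(g_1,g_2) = g_2$, so $(1_G,1_G)(\pi_2(g_1,g_2)) = (g_2,g_2)$, and therefore $(k,0)(n)\cdot(g_2,g_2) = (k(n),1)\cdot(g_2,g_2) = (k(n)g_2,\, g_2)$. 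Since the second coordinates automatically match, the weakly Schreier requirement for the pair $(g_1,g_2)$ is exactly that there be an $n\in N$ with $g_1 = k(n)g_2$. As a pair $(g_1,g_2)$ lies in $\Eq(e)$ precisely when $e(g_1) = e(g_2)$, quantifying over all such pairs recovers exactly the defining property of a cosetal extension.

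I do not anticipate a real obstacle: the statement is a translation between two formulations of the same condition. The only things to handle with a little care are the preliminary checks that the displayed diagram genuinely is a split extension --- that $(1_G,1_G)$ is a monoid section of $\pi_2$, and that the image of $(k,0)$ is the kernel of $\pi_2$, for which one uses that $e(g) = 1$ exactly when $g \in k(N)$ since $k$ is the kernel of $e$ --- and keeping track that the retraction is the \emph{second} projection $\pi_2$, so that the base copy of $G$ embeds in $\Eq(e)$ along the diagonal. With that bookkeeping settled the equivalence is immediate.
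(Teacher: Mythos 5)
Your proposal is correct and follows essentially the same argument as the paper: unwind the weakly Schreier condition for the kernel equivalence split extension, compute $(k,0)(n)\cdot(1_G,1_G)\pi_2(g_1,g_2) = (k(n)g_2, g_2)$, and observe that the resulting condition is exactly the cosetal condition. The extra bookkeeping you mention (verifying the diagram is a genuine split extension) is sensible but is taken for granted in the paper.
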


\begin{proof}
    Let $\normalext{N}{k}{G}{e}{H}$ be an extension and consider the kernel equivalence split extension \[\splitext{N}{(k,0)}{\mathrm{Eq}(e)}{\pi_2}{(1_G,1_G)}{G}.\]
    
    For it to be weakly Schreier we require that for all $(g,g') \in \Eq(e)$ there exists an $n \in N$ such that $(g,g') = (k,0)(n) \cdot (1_G,1_G)\pi_2(g,g') = (k(n)g',g')$. Thus, we see that this property will hold for all pairs if and only if whenever $e(g) = e(g')$ there exists an $n \in N$ such that $k(n)g' = g$, which is precisely the cosetal condition.
\end{proof}

%

\subsection*{The link to special Schreier extensions}

In \cite{martins2016baer}, a Baer sum was determined for the class of special Schreier extensions with abelian kernel. Special Schreier extensions are those whose associated kernel equivalence split extension is a Schreier split extension. Since all Schreier split extensions are weakly Schreier split extensions, it is clear that all special Schreier extensions are cosetal. We should verify that there are cosetal extensions which are not special Schreier.
%
%


\begin{proposition}
    If $\splitext{N}{k}{G}{e}{s}{H}$ is a weakly Schreier split extension with $N$ a group, then it is cosetal.
\end{proposition}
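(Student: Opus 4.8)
The plan is to reduce any two elements of $G$ lying over the same element of $H$ to a common normal form using the weakly Schreier property, and then to pass between these normal forms using inverses in $N$.

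First I would take $g, g' \in G$ with $e(g) = e(g')$ and set $h := e(g) = e(g')$. Applying the weakly Schreier condition to $g$ yields some $n \in N$ with $g = k(n)s(h)$, and applying it to $g'$ yields some $n' \in N$ with $g' = k(n')s(h)$; crucially the section term is the \emph{same} element $s(h)$ in both expressions, precisely because $e(g) = e(g')$.

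Next, using that $N$ is a group, I would set $m := n(n')\inv \in N$ and compute, with $k$ a monoid homomorphism,
\[
    k(m)\,g' \;=\; k\bigl(n(n')\inv\bigr)\,k(n')\,s(h) \;=\; k\bigl(n(n')\inv n'\bigr)\,s(h) \;=\; k(n)\,s(h) \;=\; g .
\]
This exhibits an $m \in N$ with $k(m)g' = g$, which is exactly the cosetal condition for the underlying extension $\normalext{N}{k}{G}{e}{H}$.

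I do not expect a genuine obstacle: the argument is a one-line cancellation once the normal forms are in place. The only point requiring care is that $N$ and $G$ need not be commutative, so the correcting element must be taken as $m = n(n')\inv$, acting on the \emph{left} of $g'$, rather than $(n')\inv n$; note also that only the group structure of $N$ is used, not commutativity, so the same proof would apply to weakly Schreier split extensions with group (not necessarily abelian) kernel.
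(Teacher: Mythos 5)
Your proof is correct and is essentially identical to the paper's: both decompose $g = k(n)se(g)$ and $g' = k(n')se(g')$ via the weakly Schreier property, note that $se(g) = se(g')$, and then left-multiply $g'$ by $k(n(n')^{-1})$ to recover $g$. Your closing observation that only the group structure of $N$ (not commutativity) is used matches the hypothesis of the proposition as stated.
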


\begin{proof}
    We must show that if $e(a) = e(b)$ that there exists an $n \in N$ such that $k(n)b = a$. Since our extension is weakly Schreier there exists $n_a$ and $n_b$ such that $a = k(n_a)se(a)$ and $b = k(n_b)se(b)$. Since $se(a) = se(b)$ we can write $b = k(n_b)se(a)$. Notice then that $k(n_an_b\inv)b = a$ and so we are done.
\end{proof}

This strongly suggests that there are cosetal extensions which are not special Schreier. In order to make this concrete, we demonstrate an example of a weakly Schreier extension with a group kernel, which is not Schreier.

\begin{example}
    We make use of the course quotient construction in \cite{faul2019characterization}. Let us take the integers $\mathbb{Z}$ with addition as the group kernel and the two element meet-semilattice $H = \{1,h\}$ as the cokernel.
    
    We then generate the coarse quotient on $\mathbb{Z} \times H$ which has that $(n,1) \sim (n',1)$ if and only if $n = n'$ and $(n,h) \sim (n',h)$ for all $n$ and $n' \in \mathbb{Z}$. Multiplication is given by
    \begin{enumerate}
        \item $[n,1][n',1] = [nn',1]$ and
        \item $x[n,h] = [n,h] = [n,h]x$ for all $x$ in the quotient.
    \end{enumerate}
    This reduces to $\mathbb{Z}\sqcup\{\infty\}$ where $x + \infty = \infty = \infty + x$ for $x \in \mathbb{Z}\sqcup\{\infty\}$.
    Now we can consider the extension $\splitext{\mathbb{Z}}{k}{\mathbb{Z}\sqcup\{\infty\}}{e}{s}{H}$ in which $k$ is the inclusion, $e(n) = 1$ for all $n \in \mathbb{Z}$, $e(\infty) = h$ and $s(1) = 1$ and $s(h) = \infty$.
    
    Now, the kernel equivalence split extension will not be a Schreier split extension as there will be many $n \in \mathbb{Z}$ for which $(\infty,\infty) = (k(n),0) \cdot (\infty, \infty)$.
\end{example}

\section{The cosetal extension problem}

\subsection*{Extending the admissible equivalence relation and compatible action}

Since we are interested in generalising the work done on group extensions to this new setting, we shall henceforth assume that the kernel $N$ is always an abelian group.

Despite a cosetal extension $\normalext{N}{k}{G}{e}{H}$ not in general being a split extension, there is a version of the weakly Schreier condition that holds for all set theoretic splittings of $e$. For convenience we assume that all set theoretic sections $s$ of $e$ which we consider, preserve the identity.

\begin{proposition}\label{prop:semibiproducts}
    Let $\normalext{N}{k}{G}{e}{H}$ be cosetal and let $s$ be a section of $e$. Then for all $g \in G$ there exists an $n \in N$, such that $g = k(n)se(g)$.
\end{proposition}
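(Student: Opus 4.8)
The plan is to apply the definition of a cosetal extension directly, with a cleverly (in fact, obviously) chosen second argument. Fix $g \in G$. The natural candidate to compare $g$ against is $g' = se(g)$: since $s$ is a section of $e$ we have $es = \id_H$, and hence $e(se(g)) = e(g)$. Thus $g$ and $se(g)$ lie in the same fibre of $e$, and the cosetal condition immediately yields an $n \in N$ with $g = k(n) \cdot se(g)$, which is exactly the asserted decomposition.

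The only point that needs noting is the identity $e(se(g)) = e(g)$, which is nothing more than $es = \id_H$; in particular the standing convention that set-theoretic sections preserve the identity is not used here. So there is essentially no obstacle: the proposition is an unwinding of definitions, recording that the weakly Schreier–style factorisation $g = k(n)se(g)$ — which is built into the definition of a weakly Schreier split extension precisely when $s$ is the splitting — survives for \emph{every} set-theoretic section $s$ of $e$ as soon as the extension is merely cosetal rather than split.

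Alternatively, one could route the argument through the earlier observation that the kernel equivalence split extension of a cosetal extension is weakly Schreier: applying the weakly Schreier property to the pair $(g, se(g)) \in \Eq(e)$ recovers the same $n$. The direct argument above is cleaner, so that is the one I would write out.
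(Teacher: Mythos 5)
Your argument is correct and is exactly the paper's proof: observe that $e(se(g)) = e(g)$ and apply the cosetal condition to the pair $g$ and $se(g)$. Nothing further is needed.
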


\begin{proof}
    Simply observe that $e(g) = ese(g)$ and apply the cosetal property to $g$ and $se(g)$.
\end{proof}

In \cite{martins2020semi}, a class of extensions more general than weakly Schreier extensions, called \emph{semi-biproducts}, are considered. These extensions $\normalext{N}{k}{G}{e}{H}$ have as additional data a set theoretic section $s$ of $e$ and also a set theoretic retraction $q$ of $k$. Together they satisfy the weakly Schreier condition that for all $g \in G$, $g = kq(g)se(g)$. It is clear from \cref{prop:semibiproducts}, that cosetal extensions can be equipped with $q$ and $s$ turning them into semi-biproducts.

It was shown (albeit in a different, but equivalent form) that the characterization of weakly Schreier extensions in \cite{faul2019characterization} generalises naturally to semi-biproducts. When $N$ is an abelian group and $\normalext{N}{k}{G}{e}{H}$ is assumed to be cosetal, we obtain a characterization even more closely resembles the weakly Schreier characterization.

\begin{proposition}
    Let $\normalext{N}{k}{G}{e}{H}$ be a cosetal extension and let $s$ be a section of $e$. The equivalence relation $E_s$, defined by $(n,h) \sim (n',h')$ if and only if $k(n)s(h) = k(n')s(h')$, is admissible.
\end{proposition}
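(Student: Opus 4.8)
The plan is to verify directly the four defining properties of an admissible equivalence relation, first observing that $E_s$ is automatically an equivalence relation, being the kernel relation of the set map $(n,h)\mapsto k(n)s(h)$. Properties (1)--(3) are routine and use only that $k$ is a monomorphism, that $e$ is a homomorphism with $ek$ constant at $1$ and $es=\id_H$, and that $s$ preserves the identity. Explicitly: if $(n,1)\sim(n',1)$ then $k(n)s(1)=k(n')s(1)$ becomes $k(n)=k(n')$ since $s(1)=1$, and injectivity of $k$ gives $n=n'$, proving (1); applying $e$ to $k(n)s(h)=k(n')s(h')$ turns the two sides into $h$ and $h'$, proving (2); and left-multiplying $k(n)s(h)=k(n')s(h)$ by $k(x)$ and using that $k$ is a homomorphism gives $k(xn)s(h)=k(xn')s(h)$, proving (3). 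None of these three steps needs $N$ abelian.

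The main obstacle is property (4): from $k(n)s(h)=k(n')s(h)$ one must deduce $k(n)s(hx)=k(n')s(hx)$, and the difficulty is that $s$ is only a set-theoretic section, so $s(hx)\ne s(h)s(x)$ in general. The idea is to compare the two elements $s(hx)$ and $s(h)s(x)$ of $G$: they have the same image $hx$ under $e$, so the cosetal property supplies an $m\in N$ with $s(hx)=k(m)\,s(h)s(x)$. Then
\[
    k(n)s(hx)=k(n)k(m)s(h)s(x)=k(m)k(n)s(h)s(x),
\]
where the second equality uses that $N$ is abelian, so that $k(n)$ and $k(m)$ commute in $G$. Right-multiplying the hypothesis by $s(x)$ yields $k(n)s(h)s(x)=k(n')s(h)s(x)$, so the displayed expression equals $k(m)k(n')s(h)s(x)=k(n')k(m)s(h)s(x)=k(n')s(hx)$, again commuting $k(m)$ past $k(n')$ and then reassembling $s(hx)$. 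This gives $(n,hx)\sim(n',hx)$.

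The one point requiring care is precisely this use of commutativity. One cannot instead argue by cancelling $k(m)$ from an identity of the form $k(m)k(n)s(hx)=k(m)k(n')s(hx)$, because $G$ is merely a monoid and $k(m)$ need not be left-cancellable. The argument must move $k(m)$ to the outside of the product \emph{before} invoking the hypothesis and move it back \emph{afterwards}, and it is exactly the hypothesis that $N$ is an abelian group (hence that elements of $k(N)$ commute with one another in $G$) that makes this shuffling legitimate.
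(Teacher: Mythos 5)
Your proof is correct and follows the paper's argument almost exactly: properties (1)--(3) are handled identically, and for (4) both you and the paper compare $s(hx)$ with $s(h)s(x)$ via the cosetal property and then shuffle the resulting element of $k(N)$ past $k(n)$ using that $N$ is abelian. The only difference is the direction in which you apply the cosetal condition: you write $s(hx)=k(m)s(h)s(x)$ and never need to cancel anything, whereas the paper writes $s(h)s(x)=k(a)s(hx)$, derives $k(a)k(n)s(hx)=k(a)k(n')s(hx)$, and then cancels $k(a)$.

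Your closing remark, however, is mistaken: you assert that one \emph{cannot} argue by cancelling $k(m)$ because $G$ is merely a monoid, but this is precisely what the paper does, and it is legitimate. The standing hypothesis in this section is that $N$ is an abelian \emph{group}, so $a\in N$ has an inverse and $k(a)$ is invertible in $G$ with inverse $k(a\inv)$; invertible elements of any monoid are cancellable. A genuine (if minor) advantage of your variant is that it never uses invertibility at all, only commutativity, so your argument for property (4) would survive with $N$ merely a commutative monoid --- though that generality is not needed here, and property (1) in both treatments still relies on $k$ being monic.
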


\begin{proof}
    Notice that if $(n,1) \sim (n',1)$, then $k(n) = k(n')$, since $s$ preserves the unit. This implies that $n = n'$ as required.
    
    Now if $(n,h) \sim (n',h')$, then $k(n)s(h) = k(n')s(h')$. Applying $e$ to both sides yields $h = h'$ as required.
    
    If $k(n)s(h) = k(n')s(h)$ then of course $k(x)k(n)s(h) = k(x)k(n')s(h)$. Since $k$ is a monoid homomorphism, this gives that $(n,h) \sim (n',h)$ implies that $(xn,h) \sim (xn',h)$ for all $x \in N$.
    
    Finally, suppose that $k(n)s(h) = k(n')s(h)$ and consider $k(n)s(hx)$ and $k(n')s(hx)$. Notice that $e(s(h)s(x)) = es(hx)$ and so, since our extension is cosetal, we have that there exists an $a \in N$ such that $k(a)s(hx) = s(h)s(x)$. Now consider the following calculation.
    \begin{align*}
        k(a)k(n)s(hx)   &= k(n)k(a)s(hx) \\
                        &= k(n)s(h)s(x) \\
                        &= k(n')s(h)s(x) \\
                        &= k(a)k(n')s(hx).
    \end{align*}
    Here the first equality holds because $N$ is an abelian group. Now since $a$ is invertible it follows that $k(n)s(hx) = k(n')s(hx)$.
    This shows that for all $x \in H$, $(n,h) \sim (n',h)$ implies $(n,hx) \sim (n',hx)$, and hence $E$ is admissible.
\end{proof}

The above result required an arbitrary choice of splitting. The following proposition demonstrates that the choice of splitting does not matter.

\begin{proposition}
    Let $\normalext{N}{k}{G}{e}{H}$ be a cosetal extension and let $s$ and $s'$ be sections of $e$. Then the associated equivalence relations $E_s$ and $E_{s'}$ are equal.
\end{proposition}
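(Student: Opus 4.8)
The plan is to reduce everything to \cref{cor:coset}, which for any two sections $s,s'$ of $e$ produces a function $t\colon H \to N$ with $s(h) = kt(h)\cdot s'(h)$ for all $h \in H$. Since interchanging the roles of $s$ and $s'$ merely replaces $t$ by its pointwise inverse in $N$, it is enough to establish the single inclusion $E_s \subseteq E_{s'}$; the reverse inclusion $E_{s'} \subseteq E_s$ then follows by the same argument.

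First I would take a pair with $(n,h) \sim_{E_s} (n',h')$, i.e. $k(n)s(h) = k(n')s(h')$, apply $e$ to both sides, and use that $ek$ is trivial while $es = \id_H$ to conclude $h = h'$; so in fact $k(n)s(h) = k(n')s(h)$. Then I substitute $s(h) = kt(h)s'(h)$ into both sides, use that $N$ is abelian to bring the factor $k(t(h))$ to the front of each product, and finally cancel $k(t(h))$ on the left. This cancellation is legitimate precisely because $N$ is a group, and it yields $k(n)s'(h) = k(n')s'(h)$, that is $(n,h) \sim_{E_{s'}} (n',h')$.

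I do not expect any genuine obstacle here: the statement is essentially a bookkeeping exercise resting on \cref{cor:coset}. The only two points to watch are that $k$ turns products of elements of the abelian group $N$ into commuting products in $G$ (so that $k(t(h))$ really does factor out of $k(nt(h))$), and that the invertibility of $k(t(h))$ — the single place where the abelian-group hypothesis on $N$ is used — is what licenses the cancellation. Combining $E_s \subseteq E_{s'}$ with the symmetric inclusion then gives $E_s = E_{s'}$.
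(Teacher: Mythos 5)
Your proof is correct and follows essentially the same route as the paper: both reduce to \cref{cor:coset} and the commutativity of elements of $k(N)$. The only cosmetic difference is that the paper orients the coset relation as $s'(h) = kt(h)s(h)$ and simply multiplies through, so it never needs to cancel the invertible element $kt(h)$ as you do (and note that commutativity, not just invertibility, is also a place where the abelian hypothesis enters).
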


\begin{proof}
    Without loss of generality, it is sufficient to show that $E_{s} \subseteq E_{s'}$. By \cref{cor:coset} there exists a function $t\colon H \to N$ such that $kt(h)s(h) = s'(h)$.
    
    Suppose that $(n,h) \sim_s (n',h)$. This means that $k(n)s(h) = k(n')s(h)$. We now have
    \begin{align*}
        k(n)s'(h)   &= k(n)kt(h)s(h) \\
                    &= kt(h)k(n)s(h) \\
                    &= kt(h)k(n')s(h) \\
                    &= k(n')kt(h)s(h) \\
                    &= k(n')s'(h).
    \end{align*}
    Hence $(n,h) \sim_{s'} (n',h)$ as required.
\end{proof}

For admissible equivalence relations, it makes sense to consider the following two operations.
\begin{enumerate}
    \item $n' \ast [n,h]  = [n'n,h]$ and
    \item $[n,h] \ast h' = [n,hh']$.
\end{enumerate}

We also find that each cosetal extension $\normalext{N}{k}{G}{e}{H}$ has a unique equivalence class of actions compatible with the admissible equivalence relation. The idea is to consider the kernel equivalence split extension $\splitext{N}{(k,0)}{\Eq(e)}{\pi_2}{(1_G,1_G)}{G}$ which we know to be weakly Schreier and to take one of the compatible actions $\alpha\colon G \times N \to N$ associated to it. Then we simply define the 'action' $\phi \colon H \times N \to N$ as $\alpha(s \times 1_N)$ for some section $s$. Before we can show this action is compatible, we prove the following useful lemma.

\begin{lemma}\label{lem:conjugation}
    Let $\normalext{N}{k}{G}{e}{H}$ be cosetal and let $\splitext{N}{(k,0)}{\Eq(e)}{\pi_2}{(1_G,1_G)}{G}$ be its associated weakly Schreier kernel equivalence split extension. Then if $\alpha\colon G \times N \to N$ is a compatible action, we have that $k\alpha(g,n)g = gk(n)$.
\end{lemma}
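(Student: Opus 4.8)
The plan is to unwind what it means for $\alpha$ to be a compatible action of $G$ on $N$ with respect to the admissible equivalence relation on $N \times G$ arising from the kernel equivalence split extension $\splitext{N}{(k,0)}{\Eq(e)}{\pi_2}{(1_G,1_G)}{G}$. By the discussion in \cref{sec:Background}, given a retraction $q$ of $(k,0)$ satisfying $\xi = (k,0)q(\xi)\cdot(1_G,1_G)\pi_2(\xi)$ for all $\xi \in \Eq(e)$, the associated action is $\alpha(g,n) = q\big((1_G,1_G)(g)\cdot(k,0)(n)\big) = q(gk(n), g)$. So I would first compute $(gk(n), g)$ inside $\Eq(e)$ using this decomposition: writing $\alpha(g,n) = q(gk(n),g)$, we get
\[
    (gk(n),g) = (k,0)\big(\alpha(g,n)\big)\cdot(1_G,1_G)\pi_2(gk(n),g) = (k\alpha(g,n), 1)\cdot(g,g) = (k\alpha(g,n)\,g,\ g).
\]
Comparing first components directly yields $k\alpha(g,n)\,g = gk(n)$, which is exactly the claim.

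The subtlety is that $\alpha$ in the statement is an \emph{arbitrary} compatible action, not necessarily one literally of the form $q(gk(n),g)$ for a chosen retraction $q$. So the second step is to bridge that gap: I would either (a) observe that every compatible action arises from some such $q$ — which is how compatible actions were produced in the background section — or, more robustly, (b) argue that two compatible actions $\alpha, \alpha'$ satisfy $(\alpha(g,n),g) \sim (\alpha'(g,n),g)$, and then use property (1) of admissibility together with the first component-extraction to conclude that $k\alpha(g,n)g$ depends only on the equivalence class, and hence equals $k\alpha'(g,n)g = gk(n)$ whenever one representative works. Concretely: the relation on $N \times G$ here identifies $(m,g) \sim (m',g)$ exactly when $k(m)g = k(m')g$ (this is the $E_s$-type description applied to the split extension with section $(1_G,1_G)$), so $(\alpha(g,n),g) \sim (q(gk(n),g),g)$ immediately gives $k\alpha(g,n)g = k\,q(gk(n),g)\,g = gk(n)$ by step one.

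The main obstacle I anticipate is bookkeeping rather than depth: one must be careful that the admissible equivalence relation associated to the kernel equivalence split extension is the one on $N \times G$ (not $N \times H$), that its explicit form is $(m,g)\sim(m',g) \iff k(m)g = k(m')g$, and that the "$\sim$" in the compatible-action axioms of \cref{sec:Background} refers to precisely this relation. Once that identification is pinned down, the result is essentially the statement that the first-component reading of the weakly Schreier decomposition in $\Eq(e)$ is exactly the conjugation-like identity, generalising the group-theoretic fact $\alpha(g,n) = gk(n)g\inv$ noted in the introduction. I would therefore keep the proof short: recall the explicit relation and action on the kernel equivalence split extension, perform the one-line computation in $\Eq(e)$, and invoke admissibility to handle an arbitrary representative $\alpha$.
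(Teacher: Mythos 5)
Your proposal is correct and follows essentially the same route as the paper: identify $\alpha(g,n) = q(gk(n),g)$ for a retraction $q$ and read off the first component of the weakly Schreier decomposition of $(gk(n),g)$ in $\Eq(e)$. Your additional step (b), handling an arbitrary compatible action via the explicit form $(m,g)\sim(m',g) \iff k(m)g = k(m')g$ of the admissible relation, is a small refinement of a point the paper dispatches with ``recall that all compatible actions come from particular Schreier retractions,'' but it does not change the argument.
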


\begin{proof}
    Recall that all compatible actions $\alpha$ come from particular Schreier retractions. Let $q$ be a Schreier retraction for $\splitext{N}{(k,0)}{\Eq(e)}{\pi_2}{(1_G,1_G)}{G}$ and define
    \begin{align*}
        \alpha(g,n) &= q((1_G,1_G)(g) \cdot (k,0)(n)) \\
                    &= q(gk(n),g).
    \end{align*}
    Notice that we have
    \begin{align*}
        (gk(n),g)   &= (k,0)q(gk(n),g) \cdot (1_G,1_G)\pi_2(gk(n),g) \\
                    &= (k,0)\alpha(g,n) \cdot (1_G,1_G)\pi_2(gk(n),g) \\
                    &= (k\alpha(g,n),1) \cdot (g,g) \\
                    &= (k\alpha(g,n)g,g).
    \end{align*}
    Thus we can deduce that $k\alpha(g,n)g = gk(n)$ as required.
\end{proof}

\begin{proposition}
    Let $\normalext{N}{k}{G}{e}{H}$ be cosetal, let $s$ be a section of $e$ and let $\alpha\colon G \times N \to N$ be a compatible action associated to its (weakly Schreier) kernel equivalence split extension. Then the map $\phi = \alpha(s \times 1_N)$ is compatible with the associated admissible equivalence relation $E$.
\end{proposition}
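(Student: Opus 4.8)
The plan is to check the six defining identities of a compatible action in turn, with \cref{lem:conjugation} doing essentially all of the work. Specialising that lemma to $g = s(h)$ and recalling that $\phi(h,n) = \alpha(s(h),n)$ yields the single identity
\[
    k\phi(h,n)\,s(h) = s(h)\,k(n) \qquad\text{for all } h \in H,\ n \in N,
\]
which is the monoid substitute for the conjugation formula $\phi(h,n) = s(h)\,n\,s(h)\inv$ of the classical group case. Combined with the facts that $k$ is a homomorphism and that $N$ is an abelian group, this identity reduces each of the six conditions to a short manipulation inside $G$, once the bracket notation is unwound via $[n,h] = [n',h'] \iff k(n)s(h) = k(n')s(h')$.

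First I would dispose of the identities that do not mix two elements of $H$. Conditions~5 and~6 follow at once: the displayed identity gives $k\phi(h,1)s(h) = s(h)k(1) = s(h)$ and $k\phi(1,n)s(1) = s(1)k(n) = k(n) = k(n)s(1)$. For condition~1, if $k(n_1)s(h) = k(n_2)s(h)$ then $k(n_i\phi(h,n))s(h) = k(n_i)s(h)k(n)$, and the two sides agree by hypothesis. Condition~3 is the same kind of computation applied twice: $k\phi(h,n)k\phi(h,n')s(h) = k\phi(h,n)s(h)k(n') = s(h)k(n)k(n') = k\phi(h,nn')s(h)$.

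The real obstacle is that the section $s$ need not be a monoid homomorphism, so $s(hh') \neq s(h)s(h')$ in general; this is exactly what makes conditions~2 and~4 --- the ones involving $hh'$ --- nontrivial. To bridge the gap I would apply the cosetal property to the pair $s(h)s(h')$ and $s(hh')$, which have the common image $hh'$ under $e$, to obtain $a \in N$ with $k(a)s(hh') = s(h)s(h')$; the point is that $k(a)$ is \emph{invertible} because $N$ is a group, and it commutes with all of $k(N)$ because $N$ is abelian. For condition~2, from $k(n)s(h') = k(n')s(h')$ one gets $k\phi(h,n)s(h)s(h') = s(h)k(n)s(h') = s(h)k(n')s(h') = k\phi(h,n')s(h)s(h')$; substituting $s(h)s(h') = k(a)s(hh')$, commuting $k(a)$ past $k\phi(h,\cdot)$ and cancelling it gives $k\phi(h,n)s(hh') = k\phi(h,n')s(hh')$, which is $[\phi(h,n),hh'] = [\phi(h,n'),hh']$. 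For condition~4 the same trick applies: starting from $k\phi(h,\phi(h',n))s(hh')$, multiplying by $k(a)$ and rewriting $k(a)s(hh') = s(h)s(h')$ lets me apply the displayed identity first at $h$ and then at $h'$ to reach $s(h)s(h')k(n) = k(a)s(hh')k(n)$; cancelling the invertible $k(a)$ leaves $k\phi(h,\phi(h',n))s(hh') = s(hh')k(n) = k\phi(hh',n)s(hh')$, which is the bracket equality $[\phi(hh',n),hh'] = [\phi(h,\phi(h',n)),hh']$. Once all six conditions are in place, $\phi$ is by definition a compatible action for the admissible equivalence relation $E$; the only essential use of ``$N$ is an abelian group'' beyond \cref{lem:conjugation} is in producing and cancelling these correction terms $k(a)$.
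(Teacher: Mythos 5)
Your proposal is correct and follows essentially the same route as the paper: it specialises \cref{lem:conjugation} to $g = s(h)$ to get $k\phi(h,n)s(h) = s(h)k(n)$, verifies conditions 1, 3, 5, 6 directly from this, and handles the two conditions involving products $hh'$ by introducing the correction term $a \in N$ with $k(a)s(hh') = s(h)s(h')$ and cancelling the invertible, central element $k(a)$. The individual computations match the paper's proof step for step.
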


\begin{proof}
    We begin by showing that $(n,h) \sim (n',h)$ implies that $(n\phi(h,x),h) \sim (n'\phi(h,x),h)$ for all $x \in N$.
    
    Consider $k(n)k\phi(h,x)s(h)$. Using \cref{lem:conjugation} and the fact that $\phi(h,x) = \alpha(s(h),x)$ we get
    \begin{align*}
        k(n)k\phi(h,x)s(h)      &= k(n)s(h)k(x) \\
                                &= k(n')s(h)k(x) \\
                                &= k(n')k\phi(h,x).
    \end{align*}
    This gives the desired result.
    
    Now let us show that $(n,h) \sim (n',h)$ implies that $(\phi(x,n),xh) \sim (\phi(x,n'),xh)$.
    
    Let $a \in N$ be such that $k(a)s(xh) = s(x)s(h)$ and consider
    \begin{align*}
        k(a)k\phi(x,n)s(xh) &= k\phi(x,n)s(x)s(h) \\
                            &= s(x)k(n)s(h) \\
                            &= s(x)k(n')s(h) \\
                            &= k(a)k\phi(x,n')s(xh).
    \end{align*}
    Again, since $a$ is invertible we get that $k\phi(x,n)s(xh) = k\phi(x,n')s(xh)$ as required.
    
    Next we show that $(\phi(h,nn'),h) \sim (\phi(h,n)\phi(h,n'),h)$.
    
    Observe the following calculation.
    \begin{align*}
        k\phi(h,nn')s(h)    &= s(h)k(n)k(n') \\
                            &= k\phi(h,n)s(h)k(n') \\
                            &= k\phi(h,n)k\phi(h,n')s(h).
    \end{align*}
    This gives the desired result.
    
    Next we show that $(\phi(hh',n),hh') \sim (\phi(h,\phi(h',n)))$.
    
    Let $a \in N$ be such that $k(a)s(hh') = s(h)s(h')$ and consider the following.
    \begin{align*}
        k(a)k\phi(hh',n)s(hh')  &= k(a)s(hh')k(n) \\
                                &= s(h)s(h')k(n) \\
                                &= s(h)k\phi(h',n)s(h') \\
                                &= k\phi(h,\phi(h',n))s(h)s(h') \\
                                &= k(a)k\phi(h,\phi(h',n))s(hh').
    \end{align*}
    This gives that $k\phi(hh',n)s(hh') = k\phi(h,\phi(h',n))s(hh')$, which in turn yields our desired result.
    
    Finally, we must show that $(\phi(h,1),h) \sim (1,h)$ and that $(\phi(1,n),1) \sim (n,1)$.
    
    For the first observe that $k\phi(h,1)s(h) = s(h)k(1) = s(h)$ and for the second that $k\phi(1,n)s(1) = k(n)$. Notice that the latter case in fact implies that $\phi(1,n) = n$.
    
    Thus, we have shown that each of the six necessary conditions are satisfied and so $\phi$ is compatible with $E$.
\end{proof}

Our construction of $\phi$ required an arbitrary choice of $\alpha$. We now show this choice does not matter.

\begin{proposition}
    Let $\normalext{N}{k}{G}{e}{H}$ be cosetal, let $s$ be a section of $e$ and let $\alpha\colon G \times N \to N$ and $\alpha'\colon G \times N \to N$ be compatible actions associated to its kernel equivalence split extension. Then the maps $\phi = \alpha(s \times 1_N)$ and $\phi' = \alpha'(s \times 1_N)$ are equivalent compatible actions with respect to the admissible equivalence relation $E$.
\end{proposition}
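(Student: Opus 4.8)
The plan is to reduce the claim to a single application of \cref{lem:conjugation}. Recall from the discussion in \cref{sec:Background} that two compatible actions with respect to an admissible equivalence relation $E$ are equivalent precisely when they induce the same multiplication on the quotient, and that this happens if and only if $(\phi(h,n),h) \sim (\phi'(h,n),h)$ for all $n \in N$ and $h \in H$. Since $E = E_s$ is the relation with $(a,h) \sim (a',h')$ if and only if $k(a)s(h) = k(a')s(h')$, what must be shown is simply that $k\phi(h,n)s(h) = k\phi'(h,n)s(h)$ for every $n \in N$ and $h \in H$.

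First I would unwind the definitions: by construction $\phi(h,n) = \alpha(s(h),n)$ and $\phi'(h,n) = \alpha'(s(h),n)$, where $\alpha$ and $\alpha'$ are compatible actions of the (weakly Schreier) kernel equivalence split extension $\splitext{N}{(k,0)}{\Eq(e)}{\pi_2}{(1_G,1_G)}{G}$. Next I would apply \cref{lem:conjugation} to each of $\alpha$ and $\alpha'$ with the group element $g = s(h)$, which yields $k\alpha(s(h),n)\,s(h) = s(h)k(n)$ and likewise $k\alpha'(s(h),n)\,s(h) = s(h)k(n)$. Substituting the definitions of $\phi$ and $\phi'$, both $k\phi(h,n)s(h)$ and $k\phi'(h,n)s(h)$ are equal to $s(h)k(n)$, hence equal to one another. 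This is exactly the statement $(\phi(h,n),h) \sim_E (\phi'(h,n),h)$, so $\phi$ and $\phi'$ are equivalent.

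There is essentially no obstacle here: the content has already been packaged into \cref{lem:conjugation}, which says that any compatible action of the kernel equivalence split extension is forced to implement the (generalised) conjugation $gk(n) = k\alpha(g,n)g$, and is therefore pinned down up to the admissible relation. The only thing to be slightly careful about is bookkeeping — making sure the relation $E$ used to define equivalence of compatible actions is indeed $E_s$ for the fixed section $s$, and that $\phi, \phi'$ are defined using that same $s$ — but this is immediate from the hypotheses. If one wanted to avoid even invoking \cref{lem:conjugation} twice, one could instead remark that $\alpha$ and $\alpha'$ are themselves equivalent compatible actions of the kernel equivalence split extension (by the uniqueness part of the weakly Schreier characterization recalled in \cref{sec:Background}), and then check that precomposition with $s \times 1_N$ preserves equivalence; but the direct route via \cref{lem:conjugation} is shorter.
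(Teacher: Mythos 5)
Your proposal is correct and follows exactly the paper's own argument: apply \cref{lem:conjugation} to both $\alpha$ and $\alpha'$ at $g = s(h)$ to get $k\phi(h,n)s(h) = s(h)k(n) = k\phi'(h,n)s(h)$, which is precisely $(\phi(h,n),h) \sim_E (\phi'(h,n),h)$. No issues.
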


\begin{proof}
    We must show that $(\phi(h,n),h) \sim (\phi'(h,n),h)$ for all $n \in N$ and $h \in H$. This follows immediately from \cref{lem:conjugation} applied to $\alpha$ and $\alpha'$ as $k\phi(h,n)s(h) = s(h)k(n) = k\phi'(h,n)s(h)$.
\end{proof}

In fact, the choice of splitting does not matter either.

\begin{proposition}
    Let $\normalext{N}{k}{G}{e}{H}$ be cosetal, let $s$ and $s'$ be sections of $e$ and let $\alpha\colon G \times N \to N$ be a compatible action associated to its kernel equivalence split extension. Then the maps $\phi = \alpha(s \times 1_N)$ and $\phi' = \alpha(s' \times 1_N)$ are equivalent with respect to the associated admissible equivalence relation $E$.
\end{proposition}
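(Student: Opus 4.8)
The plan is to mimic the proof of the previous proposition, where the choice of $\alpha$ was shown to be irrelevant, but now tracking the change of section. Recall from \cref{cor:coset} that since $s$ and $s'$ are both sections of $e$, there is a function $t\colon H \to N$ with $s'(h) = kt(h)\,s(h)$ for all $h \in H$. We must show $(\phi(h,n),h) \sim_E (\phi'(h,n),h)$, where $\phi = \alpha(s\times 1_N)$ and $\phi' = \alpha(s'\times 1_N)$; equivalently, that $k\phi(h,n)s(h) = k\phi'(h,n)s(h)$, since $E$ is defined by equality after applying $k(-)\,s(-)$.

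The key tool is \cref{lem:conjugation}: applied at the element $s(h) \in G$ it gives $k\phi(h,n)\,s(h) = k\alpha(s(h),n)\,s(h) = s(h)\,k(n)$, and applied at the element $s'(h)$ it gives $k\phi'(h,n)\,s'(h) = s'(h)\,k(n)$. So first I would compute, using $s'(h) = kt(h)s(h)$ and the fact that $N$ is an abelian group (so $kt(h)$ commutes with everything in $k(N)$ and is invertible):
\begin{align*}
    k\phi'(h,n)\,s(h) &= kt(h)\inv\, k\phi'(h,n)\, kt(h)\, s(h) \\
                      &= kt(h)\inv\, k\phi'(h,n)\, s'(h) \\
                      &= kt(h)\inv\, s'(h)\, k(n) \\
                      &= kt(h)\inv\, kt(h)\, s(h)\, k(n) \\
                      &= s(h)\, k(n).
\end{align*}
Comparing with $k\phi(h,n)\,s(h) = s(h)\,k(n)$ from the previous paragraph yields $k\phi(h,n)s(h) = k\phi'(h,n)s(h)$, which is exactly the statement $(\phi(h,n),h)\sim_E (\phi'(h,n),h)$.

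The only subtlety — and the one place to be careful — is that $E$ here is the admissible equivalence relation associated to the extension, which by an earlier proposition is independent of the chosen section (so $E_s = E_{s'} = E$); this is what lets me test the relation using $k(-)\,s(-)$ rather than $k(-)\,s'(-)$. Beyond that the argument is a short manipulation in the abelian group $N$ together with two applications of \cref{lem:conjugation}, so I do not anticipate a genuine obstacle; the work is purely bookkeeping of where $t(h)$ and its inverse sit relative to $k(N)$.
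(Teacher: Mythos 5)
Your proof is correct and follows essentially the same route as the paper's: both use \cref{cor:coset} to produce $t$ relating the two sections, \cref{lem:conjugation} to rewrite $k\phi(h,n)s(h)$ and $k\phi'(h,n)s'(h)$, and the commutativity and invertibility of elements of $k(N)$ to conclude $k\phi(h,n)s(h)=k\phi'(h,n)s(h)$. The only (immaterial) differences are the orientation of the relation between $s$ and $s'$ and your explicit remark that $E_s=E_{s'}$, which the paper leaves implicit.
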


\begin{proof}
    We must show that $(\phi(h,n),h) \sim (\phi'(h,n),h)$. By \cref{cor:coset}, we have a function $t\colon H \to N$ such that $kt(h)s'(h) = s(h)$. Now consider
    \begin{align*}
        k\phi'(h,n)s(h) &= k\phi'(h,n)kt(h)s'(h) \\
                        &= kt(h)k\phi'(h,n)s'(h) \\
                        &= kt(h)s'(h)k(n) \\
                        &= s(h)k(n) \\
                        &= k\phi(h,n)s(h).
    \end{align*}
    This completes the proof.
\end{proof}

So given a cosetal extension $\normalext{N}{k}{G}{e}{H}$, we can associate a unique admissible equivalence relation $E$ and a unique equivalence class of compatible actions $[\phi]$.

\subsection*{Factor sets and the Baer sum} 

We can now partition the set of isomorphism classes of cosetal extensions, parameterised by an admissible equivalence relation and a compatible action.

\begin{definition}
    Let $\SWSExt(H,N,E,[\phi])$ be the set of isomorphism classes of cosetal extension \[\normalext{N}{k}{G}{e}{H},\] such that $E$ is the associated admissible equivalence relation and $[\phi]$ the associated class of compatible actions.
\end{definition}


As in the case of extensions groups or special Schreier extensions of monoids, the extensions in $\SWSExt(H,N,E,\phi)$ correspond to some notion of factor sets.

Let $\normalext{N}{k}{G}{e}{H}$ be a cosetal extension and let $s$ be a section of $e$. Recall that $e(s(h)s(h')) = hh' = e(s(hh'))$ and so there exists an $x \in N$ such that $xs(hh') = s(h)s(h')$. Let $g\colon H \times H \to N$ be a function such that $g(h,h')s(hh') = s(h)s(h')$. Notice that we may always choose $g$ such that $g(x,1) = 1 = g(1,x)$.

\begin{definition}
    Let $\normalext{N}{k}{G}{e}{H}$ be a cosetal extension and let $s$ be a section of $e$. Then an \emph{associated factor set} is function $g_s\colon H \times H \to N$ for which $g_s(x,1) = 1 = g_s(1,x)$ and $g_s(h,h')s(hh') = s(h)s(h')$ for all $h,h' \in H$.
\end{definition}

The following result will motivate our definition of a general factor set below.

\begin{proposition}
    Let $\normalext{N}{k}{G}{e}{H}$ be a cosetal extension, $s$ be a section of $e$, $g_s$ an associated factor set and $E$ and $[\phi]$ the associated admissible equivalence relation and class of compatible actions respectively. Then
    \[
        (g(x,y)g(xy,z),xyz) \sim (\phi(x,g(y,z))g(x,yz),xyz).
    \]
\end{proposition}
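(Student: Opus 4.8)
The plan is to exploit associativity of the triple product $s(x)s(y)s(z)$ in $G$, exactly as in the classical derivation of the $2$-cocycle identity. First recall that the admissible equivalence relation $E=E_s$ is defined by $(n,h)\sim(n',h')$ if and only if $k(n)s(h)=k(n')s(h')$; hence the assertion to be proved is equivalent to the single equation
\[
    k\bigl(g(x,y)g(xy,z)\bigr)\,s(xyz) \;=\; k\bigl(\phi(x,g(y,z))g(x,yz)\bigr)\,s(xyz)
\]
in $G$ (here, as in the statement, $g$ abbreviates the chosen associated factor set $g_s$).

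Next I would compute $s(x)s(y)s(z)$ in two ways. Bracketing to the left and applying the defining relation $k(g(h,h'))s(hh')=s(h)s(h')$ twice, together with the fact that $k$ is a monoid homomorphism, gives
\[
    \bigl(s(x)s(y)\bigr)s(z) = k(g(x,y))\,s(xy)\,s(z) = k(g(x,y))\,k(g(xy,z))\,s(xyz) = k\bigl(g(x,y)g(xy,z)\bigr)s(xyz).
\]
Bracketing to the right, I first use the factor set relation on $s(y)s(z)$, then move $k(g(y,z))$ past $s(x)$ by \cref{lem:conjugation} applied to the kernel equivalence split extension (which gives $k\phi(x,n)s(x)=s(x)k(n)$, since $\phi=\alpha(s\times 1_N)$), and finally apply the factor set relation once more:
\[
    s(x)\bigl(s(y)s(z)\bigr) = s(x)\,k(g(y,z))\,s(yz) = k\phi(x,g(y,z))\,s(x)\,s(yz) = k\phi(x,g(y,z))\,k(g(x,yz))\,s(xyz).
\]
Using again that $k$ is a homomorphism, the right-hand side equals $k\bigl(\phi(x,g(y,z))g(x,yz)\bigr)s(xyz)$.

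Associativity of multiplication in $G$ then forces the two expressions to coincide, which is precisely the displayed equation above, hence the required relation in $(N\times H)/E$. I do not expect a genuine obstacle: the only points requiring care are the use of \cref{lem:conjugation} to commute $k(g(y,z))$ past $s(x)$ at the cost of replacing it by $k\phi(x,g(y,z))$, and the observation that the conclusion is just an unwinding of the definition of $E$. One may also note, for completeness, that the statement does not depend on the chosen representative $\phi\in[\phi]$: if $\phi'$ is equivalent to $\phi$ then $(\phi(x,n),x)\sim(\phi'(x,n),x)$, and applying the closure operations $[\,\cdot\,]\ast(yz)$ and $g(x,yz)\ast[\,\cdot\,]$ (legitimate by admissibility, and commuting with the $N$-multiplication since $N$ is abelian) yields $(\phi(x,g(y,z))g(x,yz),xyz)\sim(\phi'(x,g(y,z))g(x,yz),xyz)$, so the two sides of the asserted relation may be taken with respect to either representative.
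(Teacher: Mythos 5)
Your proposal is correct and is essentially the paper's own argument: the paper verifies $kg(x,y)kg(xy,z)s(xyz)=k\phi(x,g(y,z))kg(x,yz)s(xyz)$ by reducing both sides to $s(x)s(y)s(z)$ via the factor-set relation and the identity $k\phi(x,n)s(x)=s(x)k(n)$ from \cref{lem:conjugation}, which is exactly your two-bracketings-of-$s(x)s(y)s(z)$ computation read in the opposite direction. The closing remark on independence of the representative of $[\phi]$ is a harmless addition not needed for the statement as posed.
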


\begin{proof}
    We must check that $kg(x,y)kg(xy,z)s(xyz) = k\phi(x,g(y,z))kg(x,yz)s(xyz)$.
    
    The left hand side gives
    \begin{align*}
        kg(x,y)kg(xy,z)s(xyz)   &= kg(x,y)s(xy)s(z) \\
                                &= s(x)s(y)s(z).
    \end{align*}
    The right side similarly gives
    \begin{align*}
        k\phi(x,g(y,z))kg(x,yz)s(xyz)   &= k\phi(x,g(y,z))s(x)s(yz) \\
                                        &= s(x)kg(y,z)s(yz) \\
                                        &= s(x)s(y)s(z).
    \end{align*}
    Thus it follows that these two pairs are equivalent.
\end{proof}

\begin{definition}
    A map $g\colon H \times H \to N$ is a \emph{factor set} with respect to an admissible equivalence relation $E$ and a compatible action $\phi$ if $g(x,1) = 1 = g(1,x)$ and
    \[
        (g(x,y)g(xy,z),xyz) \sim (\phi(x,g(y,z))g(x,yz),xyz).
    \]
\end{definition}

Notice that the first components of the equivalence are just the usual factor set definition for special Schreier extensions.

Given an abelian group $N$ and a monoid $H$ with the additional data of an admissible equivalence relation $E$ on $N \times H$, a compatible action $\phi$ and a factor set $g$, we can construct an extension.

\begin{lemma}\label{lem:calc}
    Let $E$ be an admissible equivalence relation on $N \times H$ with $N$ an abelian group. Then if $[n,h] = [n',h]$, we have $[xny,hz] = [xn'y,hz]$ for all $x,y \in N$ and $z \in H$.
\end{lemma}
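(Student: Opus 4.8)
The plan is to derive the claim from the four defining axioms of an admissible equivalence relation, applied in sequence. Recall the axioms: (1) unit-faithfulness in the first coordinate, (2) determinacy of the second coordinate, (3) closure under left multiplication by $N$ in the first coordinate, and (4) closure under right multiplication by $H$ in the second coordinate. We are given $[n,h] = [n',h]$ and must reach $[xny,hz] = [xn'y,hz]$ for arbitrary $x,y \in N$ and $z \in H$.

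First I would handle the right coordinate. Applying axiom (4) directly to $[n,h] = [n',h]$ with the element $z \in H$ yields $[n,hz] = [n',hz]$. This reduces the problem to showing $[xny,hz] = [xn'y,hz]$ starting from $[n,hz] = [n',hz]$, i.e. we may as well rename $hz$ to $h$ and just prove $[xny,h] = [xn'y,h]$ from $[n,h] = [n',h]$. Next, for the $y$ on the right of the first coordinate: here is where abelianness of $N$ enters, since axiom (3) only gives us left multiplication. Since $N$ is abelian, $xny = yxn$ and $xn'y = yxn'$, so it suffices to show $[yxn,h] = [yxn',h]$, which now is an instance of left multiplication in the first coordinate. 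Thus a single application of axiom (3) with the element $yx \in N$ (using that $N$ is closed under products) to $[n,h] = [n',h]$ gives $[(yx)n,h] = [(yx)n',h]$, and rewriting $yxn = xny$, $yxn' = xn'y$ via commutativity finishes this step.

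Assembling the two reductions: from $[n,h] = [n',h]$, axiom (3) with $yx$ gives $[xny,h] = [xn'y,h]$ (after commuting), and then axiom (4) with $z$ gives $[xny,hz] = [xn'y,hz]$. Alternatively one can apply (4) first and then (3); the order is immaterial. The only genuinely substantive point, and the one I would flag explicitly in the write-up, is that axiom (3) as stated is asymmetric (left multiplication only), so the factor $y$ appearing on the \emph{right} of the first coordinate cannot be absorbed without invoking that $N$ is abelian — this is exactly why the hypothesis "$N$ an abelian group" appears in the lemma. Everything else is a routine two-line chain of axiom applications, so there is no real obstacle; the lemma is essentially a repackaging of admissibility tailored to the multiplication formula $[n,h][n',h'] = [n\phi(h,n'),hh']$ that will be used in the subsequent construction.
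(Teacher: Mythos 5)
Your proof is correct and follows essentially the same route as the paper: the paper writes $[xny,hz] = xy \ast [n,h] \ast z$, which is exactly your combination of axiom (3) (left multiplication, made applicable to the right-hand factor $y$ via commutativity of $N$) and axiom (4) (right multiplication in the second coordinate). Your explicit remark that abelianness is what lets the right factor $y$ be absorbed into a left multiplication is precisely the point the paper's $\ast$-notation silently relies on.
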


\begin{proof}
    Suppose $[n,h] = [n'h]$. Then consider
    \begin{align*}
        [xny,hz]    &= xy \ast [n,h] \ast z \\
                    &= xy \ast [n',h] \ast z \\
                    &= [xn'y,hz].
    \end{align*}
    This completes the proof.
\end{proof}

\begin{proposition}
    Let $N$ be an abelian group, $H$ a monoid, $E$ an admissible equivalence relation, $\phi$ a compatible action and $g$ a factor set. Then $(N \times H)/E$ can be equipped with a multiplication
    \[
        [n,h][n',h'] = [n\phi(h,n')g(h,h'),hh'],
    \]
    which makes it into a monoid with identity $[1,1]$. We call this monoid $(N\times H)/E^\phi_g$.
\end{proposition}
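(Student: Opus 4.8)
The plan is to check, in turn, that the proposed multiplication is well defined on $E$-classes, that it is associative, and that $[1,1]$ is a two-sided unit. The two tools I would lean on throughout are \cref{lem:calc}, which transports an equality of classes up to a larger index, and the commutativity of $N$, which lets one freely permute the $N$-valued factors inside a bracket. For well-definedness, suppose $[n_1,h] = [n_2,h]$ and $[m_1,h'] = [m_2,h']$, and replace the two representatives one at a time. Replacing $n_1$ by $n_2$ is immediate from \cref{lem:calc} applied to $[n_1,h] = [n_2,h]$ (multiply on the left by $1$, on the right by $\phi(h,m_1)g(h,h')$, and post-compose with $h'$). For the second replacement, the second compatible-action condition applied to $[m_1,h'] = [m_2,h']$ gives $[\phi(h,m_1),hh'] = [\phi(h,m_2),hh']$; multiplying on the left by $n_2 g(h,h')$ via \cref{lem:calc} and reordering by commutativity of $N$ yields $[n_2\phi(h,m_1)g(h,h'),hh'] = [n_2\phi(h,m_2)g(h,h'),hh']$, and transitivity of $E$ closes the argument.

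For associativity, I would expand $([n_1,h_1][n_2,h_2])[n_3,h_3]$ and $[n_1,h_1]([n_2,h_2][n_3,h_3])$ into single brackets (the second components agree automatically since $H$ is a monoid): the left-hand product becomes $[n_1\phi(h_1,n_2)g(h_1,h_2)\,\phi(h_1h_2,n_3)g(h_1h_2,h_3),\,h_1h_2h_3]$, and the right-hand product becomes $[n_1\,\phi(h_1,n_2\phi(h_2,n_3)g(h_2,h_3))\,g(h_1,h_2h_3),\,h_1h_2h_3]$. Starting from the right-hand expression, I would then (i) apply the multiplicativity condition $[\phi(h,ab),h] = [\phi(h,a)\phi(h,b),h]$ twice, each time lifted to index $h_1h_2h_3$ by \cref{lem:calc}, to split the inner term into $\phi(h_1,n_2)\phi(h_1,\phi(h_2,n_3))\phi(h_1,g(h_2,h_3))$; (ii) apply the associativity-of-action condition $[\phi(hh',n),hh'] = [\phi(h,\phi(h',n)),hh']$ with $h = h_1$, $h' = h_2$, $n = n_3$, again lifted by \cref{lem:calc}, to rewrite $\phi(h_1,\phi(h_2,n_3))$ as $\phi(h_1h_2,n_3)$; (iii) apply the defining identity of a factor set, $(g(h_1,h_2)g(h_1h_2,h_3),h_1h_2h_3) \sim (\phi(h_1,g(h_2,h_3))g(h_1,h_2h_3),h_1h_2h_3)$, multiplied on the left by the remaining $N$-factors via \cref{lem:calc}, to replace $\phi(h_1,g(h_2,h_3))g(h_1,h_2h_3)$ by $g(h_1,h_2)g(h_1h_2,h_3)$; and finally (iv) reorder the $N$-factors using commutativity of $N$ to land exactly on the left-hand expression.

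For the unit laws, the normalization $g(1,x) = 1 = g(x,1)$ collapses the products to $[1,1][n,h] = [\phi(1,n),h]$ and $[n,h][1,1] = [n\phi(h,1),h]$. The sixth compatible-action condition gives $(\phi(1,n),1) \sim (n,1)$, and the first admissibility condition forces $\phi(1,n) = n$ in $N$, so $[1,1][n,h] = [n,h]$; the fifth condition gives $(\phi(h,1),h) \sim (1,h)$, and multiplying by $n$ on the left via \cref{lem:calc} gives $[n\phi(h,1),h] = [n,h]$. I expect no conceptual obstacle anywhere; the main practical care needed is in the associativity bookkeeping, where every equivalence produced by a compatibility or factor-set axiom initially lives at a small index and must be pushed up to $h_1h_2h_3$ through \cref{lem:calc} before the brackets can be merged.
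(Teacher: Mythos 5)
Your proposal is correct and follows essentially the same route as the paper: expand both associativity brackets, lift the compatible-action identities (multiplicativity and action-associativity) and the factor-set identity up to index $h_1h_2h_3$ via \cref{lem:calc} (the paper phrases this through the $\ast$ operations), and use the normalization of $g$ together with conditions (5) and (6) for the unit laws. The only difference is that you also verify well-definedness of the multiplication explicitly, which the paper leaves implicit; your argument for it (via \cref{lem:calc} in the first slot and the second compatibility condition in the second slot) is sound.
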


\begin{proof}
    For the identity we have $[1,1][n,h] = [\phi(1,n)g(1,h), h] = [n,h]$ and $[n,h][1,1] = [n\phi(h,1)g(h,1),h] = [n,h]$.
    
    Thus, it remains to show that the multiplication is associative. First we consider
    \begin{align*}
        \big([n_1,h_1][n_2,h_2]\big)[n_3,h_3]   &= [n_1\phi(h_1,n_2)g(h_1,h_2),h_1h_2][n_3,h_3] \\
                                        &= [n_1\phi(h_1,n_2)g(h_1,h_2)\phi(h_1h_2,n_3)g(h_1h_2,h_3),h_1h_2h_3] \\
                                        &= n_1\phi(h_1,n_2)\phi(h_1h_2,n_3) \ast [g(h_1,h_2)g(h_1h_2,h_3),h_1h_2h_3].
    \end{align*}
    
    Compare this to
    \begin{align*}
        [n_1,h_1]\big([n_2,h_2][n_3,h_3]\big)   &= [n_1,h_1][n_2\phi(h_2,n_3)g(h_2,h_3),h_2h_3] \\
                                        &= [n_1\phi(h_1,n_2\phi(h_2,n_3)g(h_2,h_3))g(h_1,h_2,h_3),h_1h_2h_3] \\
                                        &= [n_1\phi(h_1,n_2)\phi(h_1,\phi(h_2,n_3))\phi(h_1,g(h_2,h_3))g(h_1,h_2h_3),h_1h_2h_3] \\
                                        &= n_1\phi(h_1,n_2) \ast [\phi(h_1h_2,n_3)\phi(h_1,g(h_2,h_3))g(h_1,h_2h_3),h_1h_2h_3] \\
                                        &= n_1\phi(h_1,n_2)\phi(h_1h_2,n_3) \ast [g(h_1,h_2)g(h_1h_2,h_3),h_1h_2h_3],
    \end{align*}
    which gives us our result. 
\end{proof}

\begin{proposition}\label{prop:ext1}
    Let $N$ be an abelian group, $H$ a monoid, $E$ an admissible equivalence relation, $\phi$ a compatible action and $g$ a factor set. Then $\normalext{N}{k}{(N \times H)/E^\phi_g}{e}{H}$ is a cosetal extension, where $k(n) = [n,1]$ and $e([n,h]) = h$.
\end{proposition}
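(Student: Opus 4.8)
The plan is to check, in turn, the data defining a cosetal extension: that $k$ and $e$ are monoid homomorphisms with $k$ injective and $e$ surjective; that $k$ is the kernel of $e$; that $e$ is the cokernel of $k$ (so that the monoid of cosets is $H$); and finally the cosetal condition itself. Throughout I would use the preceding proposition, which equips $(N\times H)/E^\phi_g$ with its monoid structure and identity $[1,1]$, together with the normalisations $\phi(1,n)=n$ (established when $\phi$ was shown compatible) and $g(1,h)=1=g(h,1)$ (part of the definition of a factor set).

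The homomorphism bookkeeping is routine. Well-definedness and injectivity of $k$ are admissibility condition (1) (from $[n,1]=[n',1]$ we get $n=n'$); that $k$ is a homomorphism is the computation $k(n)k(n')=[n,1][n',1]=[n\phi(1,n')g(1,1),1]=[nn',1]=k(nn')$, and $k(1)=[1,1]$. Well-definedness of $e$ is admissibility condition (2), and $e$ is clearly a surjective homomorphism since $e([n,h][n',h'])=hh'$ and $e([1,h])=h$.

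For the kernel, I would observe that $e^{-1}(1)=\{[n,1]:n\in N\}=k(N)$, so $k$ is an isomorphism onto $e^{-1}(1)$ and hence is the kernel of $e$. For the cokernel, the one genuinely Schreier-flavoured step is the factorisation $[n,h]=k(n)\cdot[1,h]$, valid because $[n,1][1,h]=[n\phi(1,1)g(1,h),h]=[n,h]$; writing $s(h):=[1,h]$, a section of $e$, this shows that any congruence on $(N\times H)/E^\phi_g$ collapsing every $k(n)$ to $[1,1]$ must identify $[n,h]=k(n)s(h)$ with $s(h)$, and therefore must contain the kernel congruence of $e$ (which is exactly ``equal $H$-component''). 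Since that kernel congruence itself collapses $k(N)$ to the identity and $e$ is surjective, $e$ is the cokernel of $k$, and the induced map from the monoid of cosets to $H$ is an isomorphism.

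For the cosetal condition, suppose $e([n,h])=e([n',h'])$, i.e.\ $h=h'$. Using that $N$ is a group, set $m=n(n')^{-1}$; then $k(m)[n',h]=[m,1][n',h]=[m\phi(1,n')g(1,h),h]=[mn',h]=[n,h]$, as required. I expect this last point to be the easiest part of the argument — it is exactly here that the group structure of $N$ is used, mirroring the proposition that weakly Schreier split extensions with group kernel are cosetal. The only place that needs a little care is the cokernel verification, and there everything follows once the decomposition $[n,h]=k(n)s(h)$ is in hand; no identities about $E$, $\phi$ or $g$ beyond those already used to define the multiplication are required.
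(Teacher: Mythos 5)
Your proposal is correct and follows essentially the same route as the paper: both hinge on the decomposition $[n,h]=[n,1][1,h]$ to verify that $e$ is the cokernel of $k$, and both establish the cosetal condition by translating by $n(n')^{-1}$ using the group structure of $N$. Your phrasing of the cokernel step via containment of congruences is a slight repackaging of the paper's explicit construction of the induced map $\ell(h)=f([1,h])$, but the underlying argument is the same.
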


\begin{proof}
    It is apparent that $k$ and $e$ are well defined monoid homomorphisms.  It is also not hard to see that $k$ is the kernel of $e$. Thus, we must just demonstrate that $e$ is the cokernel of $k$ and that the extension is cosetal.
    
    Let $f\colon (N \times H)/E^\phi_g \to M$ be a monoid homomorphism in which $fk = 0$. It is easy to see that that $[n,h] = [n,1][1,h]$ and so we have
    \begin{align*}
        f([n,h])    &= f([n,1][1,h]) \\
                    &= f(k(n))f([1,h]) \\
                    &= f([1,h]).
    \end{align*}
    We have a map $\ell\colon H \to M$ such that $\ell(h) = f([1,h])$. It is clear that $\ell e = f$ and since $e$ is surjective we must just check that $\ell$ is a homomorphism. We have
    \begin{align*}
        \ell(h)\ell(h') &= f([1,h])f([1,h']) \\
                        &= f([1,h][1,h']) \\
                        &= f([g(h,h'),hh']) \\
                        &= f([g(h,h'),1][1,hh']) \\
                        &= f([1,hh']) \\
                        &= \ell(hh'),
    \end{align*}
    which demonstrates that $e$ is the cokernel.
    
    Now we must show that $\normalext{N}{k}{(N \times H)/E^\phi_g}{e}{H}$ is cosetal. This entails demonstrating that for two equivalence classes $[n,h]$ and $[n',h]$ that there exists an $x \in N$ such that $[x,1][n,h] = [n',h]$. Choosing $x = n'n\inv$ suffices. This completes the proof.
\end{proof}

We know how to extract from a cosetal extension the data $(E,[\phi],g)$, where $E$ is an admissible equivalence relation, $\phi$ a compatible action and $g$ a factor set associated to some section $s$ of $e$.

We also know how to take data $(E,[\phi],g)$ of the same type and generate a cosetal extension \[\normalext{N}{k}{(N \times H)/E^\phi_g}{e}{H}.\] We now relate these two processes to one another.

Fixing $E$ and $[\phi]$ we can consider the set of associated factor sets $\mathcal{F^*}(H,N,E,[\phi])$. This has a natural abelian group structure given by pointwise multiplication.

\begin{proposition}
    $\mathcal{F^*}(H,N,E,[\phi])$ is an abelian group where $(g \cdot g')(h,h') = g(h,h') \cdot g(h,h')$.
\end{proposition}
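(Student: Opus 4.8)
The plan is to exhibit $\mathcal{F}^*(H,N,E,[\phi])$ as a subgroup of an obvious ambient abelian group. Since $N$ is abelian, the set of all maps $g\colon H\times H\to N$ satisfying $g(x,1)=1=g(1,x)$ forms an abelian group under pointwise multiplication, with pointwise inverses and the constant map $1$ as identity; associativity, commutativity, and preservation of the normalisation condition are then automatic. As every factor set is normalised, it suffices to check that the constant map $1$ is a factor set, that the pointwise product of two factor sets is a factor set, and that the pointwise inverse of a factor set is a factor set. (Here whether a map is a factor set depends only on $E$ and $[\phi]$: if $\phi'\sim\phi$ then the fourth admissibility axiom gives $[\phi'(x,n),xh]=[\phi(x,n),xh]$ for every $h$, so the two factor-set conditions coincide; fix a representative $\phi$.)

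The single computational tool I would isolate first is the following: for all $x,h'\in H$ and $m,n,n'\in N$,
\[
    [m\,\phi(x,nn'),\,xh'] = [m\,\phi(x,n)\phi(x,n'),\,xh'] \quad\text{and}\quad [m\,\phi(x,1),\,xh'] = [m,\,xh'].
\]
These follow from the compatible-action identities $[\phi(x,nn'),x]=[\phi(x,n)\phi(x,n'),x]$ and $[\phi(x,1),x]=[1,x]$ by first using the fourth admissibility axiom to move the identity from the point $x$ to the point $xh'$, and then using \cref{lem:calc} to multiply on the left by $m$ --- here $N$ being abelian is exactly what makes multiplying through legitimate irrespective of side. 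Taking $h'=yz$ and $n=n'=1$ already shows the constant map $1$ is a factor set, since both sides of the factor-set identity then reduce to the equality $[\phi(x,1),xyz]=[1,xyz]$.

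For closure under products I would expand $(g\cdot g')(x,y)\,(g\cdot g')(xy,z)$, use commutativity of $N$ to separate the $g$-part from the $g'$-part, apply the factor-set identity for $g$ and then for $g'$ in turn --- each time multiplying through by the relevant element of $N$ via \cref{lem:calc} --- and then collapse $\phi(x,g(y,z))\phi(x,g'(y,z))$ to $\phi\big(x,g(y,z)g'(y,z)\big)$ using the tool above. For closure under inverses I would start from $[A,xyz]=[B,xyz]$, where $A=g(x,y)g(xy,z)$ and $B=\phi(x,g(y,z))g(x,yz)$, multiply through by $A^{-1}B^{-1}$ via \cref{lem:calc} (using $N$ abelian) to obtain $[A^{-1},xyz]=[B^{-1},xyz]$, and then rewrite $\phi(x,g(y,z))^{-1}$ as $\phi\big(x,g(y,z)^{-1}\big)$ inside the class by applying the tool above with $n=g(y,z)$, $n'=g(y,z)^{-1}$ and a suitable $m$.

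The only real obstacle is bookkeeping rather than ideas: every compatible-action and coherence identity is an equality of classes over a particular element of $N\times H$, so one must keep careful track of which element each identity lives over, routinely invoking the fourth admissibility axiom to transport an identity from $x$ (or $xy$) up to $xyz$ and \cref{lem:calc} to carry the accumulated $N$-factors past it. Because $N$ is abelian the side on which those factors sit never matters, so once the tool of the second paragraph is in place the three verifications are short and mechanical.
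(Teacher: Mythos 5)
Your proposal is correct and follows essentially the same route as the paper: verify that the constant map $1$ is a factor set, that pointwise products of factor sets are factor sets (using commutativity of $N$ and \cref{lem:calc}), and that pointwise inverses are factor sets (using invertibility in $N$ and the identity $[\phi(x,n)\phi(x,n^{-1}),\,-] = [\phi(x,1),\,-] = [1,\,-]$). Your version is somewhat more careful than the paper's --- isolating the transport-and-multiply tool explicitly and noting independence of the choice of representative of $[\phi]$ --- but the underlying argument is the same.
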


\begin{proof}
    It is clear that the constant $1$ map is a factor set and that this will behave as an identity.
    
    If $g$ and $g'$ are factor sets, then using commutativity and \cref{lem:calc} we can show that \[[(g \cdot g')(x,y)(g \cdot g')(xy,z),xyz] = [\phi(x,(g \cdot g')(y,z))(g \cdot g')(x,yz),xyz].\]
    
    Finally, we claim that if $g$ is a factor set, then the map $g\inv$ with $g\inv(h,h') = g(h,h')\inv$ is a factor set.
    Observe that $[g(x,y)g(x,yz),1][g\inv(x,y)g\inv(x,yz),xyz] = [1,1]$
    and also
    \begin{align*}
        [g(x,y)g(x,yz),1][\phi(x,g\inv(y,z))g\inv(x,yz),xyz]  &= [g(x,y)g(x,yz)\phi(x,g\inv(y,z))g\inv(x,yz),xyz] \\
        &= [\phi(x,g(y,z))g(x,yz)\phi(x,g\inv(y,z))g\inv(x,yz),xyz] \\ &= [1,1].
    \end{align*}
    Since $g(x,y)g(x,yz)$ is invertible, this gives the desired result.
\end{proof}

From \cref{prop:ext1} we have a map $\rho\colon \mathcal{F^*}(H,N,E,[\phi]) \to \SWSExt(H,N,E,[\phi])$. We do not have a canonical map \[\zeta\colon \SWSExt(H,N,E,[\phi]) \to \mathcal{F^*}(H,N,E,[\phi]),\] as in general there are many factor sets associated to each cosetal extension. We thus would like to quotient $\mathcal{F^*}(H,N,E,[\phi])$ so that all such factor sets are equivalent.

In classical group cohomology and in \cite{martins2016baer} this is a matter of defining the subgroup of inner factor sets. The idea is that if factor sets $g$ and $g'$ correspond to different splittings of the same extension, that they differ by an inner factor set.

Here our situation is slightly more complicated. It is possible to have two factor sets $g$ and $g'$ corresponding to the same splitting of a particular extension. So before we turn to inner factor sets, let us resolve this issue first.

\begin{proposition}
    The equivalence relation $F$ on $\mathcal{F^*}(H,N,E,[\phi])$ defined by $g \sim g'$ if and only if
    \[
        (g(h,h'),hh') \sim (g'(h,h'),hh')
    \]
    is a congruence.
\end{proposition}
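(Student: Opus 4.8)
The plan is to verify directly that the relation $F$ is reflexive, symmetric, transitive, and compatible with the pointwise group multiplication on $\mathcal{F^*}(H,N,E,[\phi])$. Reflexivity, symmetry, and transitivity are immediate consequences of the fact that the admissible equivalence relation $E$ on $N \times H$ is itself an equivalence relation: the condition $g \sim g'$ simply asks that $(g(h,h'),hh') \mathrel{E} (g'(h,h'),hh')$ for every pair $(h,h')$, so $F$ inherits these three properties fibrewise from $E$. So the only real content is the congruence property, i.e.\ that $g_1 \sim g_1'$ and $g_2 \sim g_2'$ imply $g_1 g_2 \sim g_1' g_2'$, where the product is taken pointwise.

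First I would reduce the congruence check to showing that $g \sim g'$ implies $gg_0 \sim g'g_0$ for an arbitrary factor set $g_0$; the full statement then follows by using commutativity of the pointwise product together with transitivity of $F$ (multiply on one side, then the other). To prove $gg_0 \sim g'g_0$, fix $h,h' \in H$ and write $x = g(h,h')$, $x' = g'(h,h')$, $y = g_0(h,h')$. By hypothesis $[x,hh'] = [x',hh']$ in $(N\times H)/E$. Since $N$ is an abelian group, \cref{lem:calc} (applied with the multiplying elements $y$ on the left and $1$ on the right, and trivial $H$-component shift $z = 1$) gives $[xy,hh'] = [x'y,hh']$, which is exactly $(gg_0(h,h'),hh') \sim (g'g_0(h,h'),hh')$. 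This holds for all $h,h'$, so $gg_0 \sim g'g_0$.

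The only mild subtlety — and the step I would be most careful about — is making sure the product $gg_0$ (and $g'g_0$) actually lands back in $\mathcal{F^*}(H,N,E,[\phi])$, so that $F$ is genuinely a relation on that set; but this is precisely the closure under products already established when $\mathcal{F^*}(H,N,E,[\phi])$ was shown to be an abelian group, so I would simply invoke that. With closure in hand, the argument above shows $F$ respects multiplication, and combined with reflexivity/symmetry/transitivity this makes $F$ a congruence. I expect no genuine obstacle here: the whole proof is a short fibrewise transfer of properties of $E$ via \cref{lem:calc}, and the main thing to get right is the bookkeeping that reduces the two-sided congruence statement to the one-sided multiplication lemma.
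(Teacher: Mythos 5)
Your proposal is correct and follows essentially the same route as the paper: the paper's proof is precisely the observation that if $g \sim g'$ and $r \sim r'$ then \cref{lem:calc} (plus transitivity of $E$) gives $[g(h,h')r(h,h'),hh'] = [g'(h,h')r'(h,h'),hh']$, which is your one-sided reduction spelled out. Your additional remarks on reflexivity/symmetry/transitivity and on closure of the product are sound bookkeeping that the paper leaves implicit.
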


\begin{proof}
    Suppose $g \sim g'$ and $r \sim r'$ and consider $[g(h,h')r(h,h'),hh']$ and $[g'(h,h')r'(h,h'),hh']$. \Cref{lem:calc} easily demonstrates their equality.
\end{proof}

Intuitively, this is the correct equivalence relation as it gives $kg(h,h')s(hh') = kg'(h,h')s(hh')$ for all splittings $s$.

Now define $\mathcal{F}(H,N,E,[\phi]) = \mathcal{F^*}(H,N,E,[\phi])/F$ where $F$ is the equivalence relation above. We can now consider the generalisation of inner factor sets.

\begin{definition}
     A factor set $g \in \mathcal{F^*}(H,N,E,[\phi])$ is an \emph{inner factor set} if and only if for some identity preserving $t \colon H \to N$ we have $g = \delta t$ where $\delta t(h,h') = \phi(h,t(h'))t(hh')\inv t(h)$.
\end{definition}

First we will show that if $\rho(g) = \rho(g')$, then $g$ and $g'$ differ by an inner factor set.

\begin{proposition}\label{prop:inner1}
    Let $g,g' \in \mathcal{F^*}(H,N,E,[\phi])$ and let $\rho(g) = \rho(g')$. Then there exists an inner factor set $\delta t$ such that $g' \sim_F \delta t \cdot g$.
\end{proposition}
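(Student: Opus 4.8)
The plan is to unpack the hypothesis $\rho(g) = \rho(g')$ into a concrete isomorphism of extensions, and then to read off the inner factor set from how that isomorphism interacts with the canonical sections. Recall that $\rho(g)$ is the cosetal extension $\normalext{N}{k}{(N \times H)/E^\phi_g}{e}{H}$ with $k(n) = [n,1]$ and $e([n,h]) = h$, and similarly for $\rho(g')$; saying $\rho(g) = \rho(g')$ means these two extensions are isomorphic, so there is a monoid isomorphism $\psi\colon (N \times H)/E^\phi_g \to (N \times H)/E^\phi_{g'}$ commuting with the $k$'s and $e$'s. First I would use $\psi k = k$ and $e' \psi = e$ to pin down the form of $\psi$: since $e'(\psi[n,h]) = h$, we have $\psi[n,h] = [m, h]$ for some $m \in N$ depending on $n,h$, and since $\psi[n,1] = [n,1]$, writing $[n,h] = [n,1][1,h]$ (valid in $(N\times H)/E^\phi_g$ by the identity-section decomposition used in \cref{prop:ext1}) gives $\psi[n,h] = [n,1]\psi[1,h] = [n \cdot t(h), h]$ where I define $t(h) \in N$ by $\psi[1,h] = [t(h), h]$. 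Identity-preservation of $\psi$ forces $[t(1),1] = [1,1]$, so $t(1) = 1$, and I may further normalise so $t$ is identity-preserving in the required sense.

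Next I would exploit that $\psi$ is a \emph{homomorphism}: compute $\psi([1,h][1,h'])$ two ways. On one hand $[1,h][1,h'] = [g(h,h'), hh']$ in $(N\times H)/E^\phi_g$, so $\psi([1,h][1,h']) = [g(h,h') \cdot t(hh'), hh']$. On the other hand $\psi[1,h]\cdot\psi[1,h'] = [t(h), h][t(h'), h'] = [t(h)\cdot\phi(h, t(h'))\cdot g'(h,h'), hh']$ using the multiplication of $(N\times H)/E^\phi_{g'}$. Equating the two (as classes in $E$) and using that $N$ is abelian so we can freely rearrange the first components, I get
\[
    (g(h,h')\cdot t(hh'),\, hh') \sim (\phi(h,t(h'))\cdot g'(h,h')\cdot t(h),\, hh').
\]
Rearranging via \cref{lem:calc} (multiplying both sides by $t(hh')\inv$, which is legitimate since it only changes the first component), this reads $(g(h,h'), hh') \sim (\phi(h,t(h'))\,t(hh')\inv\,t(h)\cdot g'(h,h'),\, hh')$, i.e. $(g(h,h'),hh') \sim (\delta t(h,h')\cdot g'(h,h'), hh')$, which is exactly $g \sim_F \delta t \cdot g'$ — wait, I want $g' \sim_F \delta t \cdot g$, so I would instead run the argument with the roles of $g$ and $g'$ swapped (using $\psi\inv$ in place of $\psi$), or equivalently absorb the sign by replacing $t$ with a suitable modification; either way the symmetric statement follows. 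I should also double-check that $\delta t$ as produced is genuinely a factor set in $\mathcal{F^*}(H,N,E,[\phi])$ — this is either a short direct cocycle computation or can be deduced from the fact that $\delta t \cdot g$ is $F$-equivalent to the factor set $g'$ and $F$ is a congruence compatible with the factor set conditions.

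The main obstacle I anticipate is the very first step: extracting a genuine monoid isomorphism $\psi$ from the bare statement $\rho(g) = \rho(g')$. In the group and special Schreier settings this is automatic from the short five lemma, but the excerpt explicitly warns that the split short five lemma \emph{fails} for weakly Schreier extensions, so I cannot take for granted that every morphism of these extensions is an isomorphism. I would need to be careful about what "$\rho(g) = \rho(g')$" is actually asserting — presumably equality as elements of $\SWSExt(H,N,E,[\phi])$, which is a set of \emph{isomorphism classes of extensions}, so an honest isomorphism of extensions does exist by definition; the subtlety is only that there may be non-invertible morphisms floating around, which I simply avoid by working with the isomorphism the hypothesis hands me. Once $\psi$ is in hand, the rest is the routine homomorphism-unwinding above, and the only computational care needed is the bookkeeping of first components modulo $E$, for which \cref{lem:calc} is exactly the right tool.
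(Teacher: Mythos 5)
Your proposal is correct and follows essentially the same route as the paper: extract an isomorphism $f$ of extensions from $\rho(g)=\rho(g')$, show it must have the form $[n,h]\mapsto[n\,t(h),h]$ for an identity-preserving $t\colon H\to N$, and then compare the two evaluations of the product of the canonical sections to read off that $g$ and $g'$ differ by $\delta t$ modulo $F$. The paper sidesteps your final sign/direction issue simply by taking $t(h)=f^*(h)\inv$ from the outset, which is exactly the "suitable modification" you mention.
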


\begin{proof}
    Let $\normalext{N}{k}{(N \times H)/E^\phi_g}{e}{H}$ and $\normalext{N}{k'}{(N \times H)/E^\phi_{g'}}{e'}{H}$ be the associated cosetal extensions and let $s\colon H \to (N\times H)/E^\phi_g$ be such that $s(h) = [1,h]$ and $s'\colon H \to (N\times H)/E^\phi_{g'}$
    be such that $s'(h) = [1,h]$.
    
    Since $\rho(g) = \rho(g')$ there is an isomorphism of extensions $f \colon (N \times H)/E^\phi_g \to (N \times H)/E^\phi_{g'}$. Now observe that we have
    \begin{align*}
        f([n,h])    &= f([n,1][1,h]) \\
                    &= f([n,1])f([1,h]) \\
                    &= [n,1]f([1,h]).
    \end{align*}
    Then let $f^*\colon H \to N$ be a function which preserves identity and for which $f[1,h] = [f^*(h),1]$. Observe then that $f([n,h]) = [f^*(h)n,h]$.
    We can then define $s^* = fs$ and notice that for $t(h) = f^*(h)\inv$ we have that $s'(h) = kt(h)s^*(h)$. It is also not hard to see that $k'g(h,h')s^*(hh') = s^*(h)s^*(h')$.
    
    We must show that $(\delta t \cdot g(h,h'),hh') \sim (g'(h,h'),hh')$.
    We know that $k'g'(h,h')s'(hh') = s'(h)s'(h')$ and so a single calculation remains.
    \begin{align*}
        k'(\delta t \cdot g)(h,h')s'(hh') &= k'\phi(h,t(h'))k't(hh')\inv k't(h)k'g(h,h')s'(hh') \\
        &= k'\phi(h,t(h'))k't(hh')\inv k't(h)k'g(h,h')k't(hh')s^*(hh') \\
        &= k'\phi(h,t(h'))k't(h)k'g(h,h')s^*(hh') \\
        &= k'\phi(h,t(h'))k't(h)s^*(h)s^*(h') \\
        &= k'\phi(h,t(h'))s'(h)s^*(h') \\
        &= s'(h)k't(h')s^*(h') \\
        &= s'(h)s'(h').
    \end{align*}
    This completes the proof.
\end{proof}

In order to show that equivalence classes of inner factor sets are the appropriate subgroup to quotient by, there is one final result to check.

\begin{proposition}
    Let $g \in \mathcal{F^*}(H,N,E,[\phi])$ and let $\delta t$ be an inner factor set. Then $\rho(g) = \rho(\delta t \cdot g)$.
\end{proposition}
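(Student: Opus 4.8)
The plan is to construct an explicit isomorphism of extensions between $\normalext{N}{k}{(N \times H)/E^\phi_g}{e}{H}$ and $\normalext{N}{k'}{(N \times H)/E^\phi_{\delta t \cdot g}}{e'}{H}$, which will establish $\rho(g) = \rho(\delta t \cdot g)$ by definition of $\rho$. Guided by the proof of \cref{prop:inner1}, where an isomorphism $f$ gave rise to a function $f^*$ with $f([n,h]) = [f^*(h)n,h]$ and $t(h) = f^*(h)\inv$, I would reverse-engineer the map: define $f\colon (N \times H)/E^\phi_g \to (N \times H)/E^\phi_{\delta t \cdot g}$ by $f([n,h]) = [t(h)\inv n, h]$ (equivalently $[f^*(h)n,h]$ with $f^*(h) = t(h)\inv$).

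The key steps, in order, are as follows. First, check that $f$ is well-defined: if $[n,h] = [n',h]$ in $E^\phi_g$, then since $E$ is the same admissible equivalence relation underlying both quotients and $t(h)\inv$ is a fixed element of $N$, \cref{lem:calc} gives $[t(h)\inv n, h] = [t(h)\inv n', h]$. Second, verify that $f$ is a monoid homomorphism; this is the calculation that $f([n,h][n',h']) = f([n\phi(h,n')g(h,h'),hh']) = [t(hh')\inv n \phi(h,n') g(h,h'), hh']$ must equal $f([n,h])f([n',h']) = [t(h)\inv n, h][t(h')\inv n', h'] = [t(h)\inv n \,\phi(h, t(h')\inv n')\, (\delta t \cdot g)(h,h'), hh']$. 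Expanding $\phi(h, t(h')\inv n') = \phi(h,t(h')\inv)\phi(h,n')$ (since $\phi(h,-)$ is an action in the first component up to $E$, and we may freely reorder inside the abelian group $N$), and substituting $(\delta t \cdot g)(h,h') = \phi(h,t(h'))t(hh')\inv t(h) g(h,h')$, the two sides agree once one cancels $\phi(h,t(h'))\phi(h,t(h')\inv) = 1$ and $t(h)t(h)\inv = 1$. Third, note $f$ is a bijection since $f\inv([n,h]) = [t(h)n, h]$, again well-defined by \cref{lem:calc}. Fourth, check compatibility with the extension data: $fk(n) = f([n,1]) = [t(1)\inv n, 1] = [n,1] = k'(n)$ since $t$ is identity-preserving, and $e'f([n,h]) = e'([t(h)\inv n, h]) = h = e([n,h])$. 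These four checks together say $f$ is an isomorphism of extensions.

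I expect the main obstacle to be the homomorphism verification in the second step, specifically keeping track of where equalities hold on the nose versus only up to $E$. The expressions $\phi(h, t(h')\inv n') = \phi(h,t(h')\inv)\phi(h,n')$ and the action identities for $\phi$ only hold modulo $E$ in general, so the computation must be carried out at the level of equivalence classes $[-,hh']$ throughout, invoking \cref{lem:calc} to move scalars in and out of the bracket and the compatible-action axioms (in their bracketed form) to manipulate $\phi$. Once one is disciplined about working inside the quotient rather than in $N \times H$, the algebra collapses exactly as in the associativity proof of $(N\times H)/E^\phi_g$, and no genuinely new difficulty arises. Everything else is bookkeeping.
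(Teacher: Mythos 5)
Your proposal is correct and takes essentially the same approach as the paper: both define the isomorphism $f([n,h]) = [t(h)\inv n, h]$ from $(N \times H)/E^\phi_g$ to $(N \times H)/E^\phi_{\delta t \cdot g}$ and verify it is a morphism of extensions. The only difference is cosmetic --- the paper routes the multiplicativity check through $s^* = fs$ and the identity $k't(h)s^*(h) = s'(h)$, whereas you cancel directly inside the equivalence classes via \cref{lem:calc}; both computations are sound.
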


\begin{proof}
    Let $\normalext{N}{k}{(N \times H)/E^\phi_g}{e}{H}$ and $\normalext{N}{k'}{(N \times H)/E^\phi_{\delta t \cdot g}}{e'}{H}$ be the associated cosetal extensions and let $s\colon H \to (N\times H)/E^\phi_g$ be such that $s(h) = [1,h]$ and $s'\colon H \to (N\times H)/E^\phi_{g'}$
    be such that $s'(h) = [1,h]$.
    
    Now inspired by the proof of \cref{prop:inner1} we define a function $f \colon (N \times H)/E^\phi_g \to (N \times H)/E^\phi_{\delta t \cdot g}$ such that $f([n,h]) = [t(h)\inv n,h]$. Since $t(h)\inv$ is invertible, it is clear that $f$ is bijective. Furthermore we have $fk(n) = f([n,1]) = [n,1] = k'(n)$ and $e'f([n,h]) = h = e([n,h])$. It is also clear that $f$
    preserves the identity and so all that remains is to show that $f$
    preserves multiplication.
    
    As before we define $s^* = fs$ and we see that $k't(h)s^*(h) = s'(h)$.

    First we look at $f([n,h])f([n',h'])$. Notice that
    \begin{align*}
        f([n,h])f([n',h'])  &= [t(h)\inv n,h][t(h')\inv n',h'] \\
                            &= [n,1][t(h)\inv,h][n',1][t(h')\inv,h'] \\
                            &= k'(n)s^*(h)k'(n')s^*(h').
    \end{align*}
    
    Next we consider $f([n,h][n',h])$. We have the following.
    \begin{align*}
        f([n,h][n',h])
        &= [t(hh')\inv n\phi(h,n')g(h,h'),hh'] \\
        &= k't(hh')\inv k'(n)k'\phi(h,n')k'g(h,h')s'(hh') \\
        &= k't(hh')\inv k'(n)k'\phi(h,n')k'g(h,h')k't(hh')s^*(hh') \\
        &= k'(n)k'\phi(h,n')k'g(h,h')s^*(hh') \\
        &= k'(n)k'\phi(h,n')s^*(h)s^*(h') \\
        &= k'(n)k'\phi(h,n')k't(h)\inv s'(h)s^*(h') \\
        &= k'(n)k't(h)\inv k'\phi(h,n')s'(h)s^*(h') \\
        &= k'(n)k't(h)\inv s'(h)k'(n')s^*(h) \\
        &= k'(n)s^*(h)k'(n')s^*(h').
    \end{align*}
    This completes the proof.
\end{proof}


%
%
Let $\mathcal{IF^*}(H,N,E,[\phi])$ be the subgroup of inner factor sets and then define the subgroup
\[\mathcal{IF}(H,N,E,[\phi]) = \{[\delta t] : \delta t \in \mathcal{IF^*}(H,N,E,[\phi])\}.\] This then allows us to define $\mathcal{H}^2(H,N,E,[\phi]) = \mathcal{F}(H,N,E,[\phi])/\mathcal{IF}(H,N,E,[\phi])$ and the map \[\zeta\colon \SWSExt(H,N,E,[\phi]) \to \mathcal{H}^2(H,N,E,[\phi])\]
in which an isomorphism class of extensions is sent to the equivalence class of factor sets which generate it.

It is clear that $\zeta\rho$ is the identity. We now show that the reverse also holds true.

\begin{proposition}
    Let $\normalext{N}{k}{G}{e}{H}$ be a cosetal extension, $E$ the associated admissible equivalence relation, $\phi$ the compatible action and $g$ the factor set corresponding to a splitting $s$. Then \[\normalext{N}{k'}{(N \times H)/E^\phi_g}{e'}{H}\] is isomorphic to $\normalext{N}{k}{G}{e}{H}$ --- that is, $\rho\zeta$ is the identity.
\end{proposition}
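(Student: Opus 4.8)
The plan is to exhibit an explicit isomorphism of extensions $\psi\colon (N\times H)/E^\phi_g \to G$, given on representatives by $\psi[n,h] = k(n)s(h)$, where $s$ is the section of $e$ used to extract the data $(E,[\phi],g)$. Recall the three identities this data satisfies relative to $s$: first, $(n,h) \sim_E (n',h')$ holds precisely when $k(n)s(h) = k(n')s(h')$; second, the chosen representative $\phi = \alpha(s\times 1_N)$ satisfies $k\phi(h,n)s(h) = s(h)k(n)$, since $\phi(h,n) = \alpha(s(h),n)$ and \cref{lem:conjugation} applied with $g = s(h)$ gives $k\alpha(s(h),n)s(h) = s(h)k(n)$; third, the factor set $g = g_s$ satisfies $kg(h,h')s(hh') = s(h)s(h')$.

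First I would check that $\psi$ is well defined: if $[n,h] = [n',h']$ in $(N\times H)/E^\phi_g$ then by the description of $E$ we have $k(n)s(h) = k(n')s(h')$, which is exactly the common value of $\psi$. Next I would verify that $\psi$ is a monoid homomorphism. It sends $[1,1]$ to $k(1)s(1) = 1$, and on products
\[
    \psi\big([n,h][n',h']\big) = k(n)k\phi(h,n')kg(h,h')s(hh') = k(n)k\phi(h,n')s(h)s(h') = k(n)s(h)k(n')s(h'),
\]
where the second equality is the factor set identity and the third is the conjugation identity; the right-hand side is $\psi[n,h]\,\psi[n',h']$.

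Since the split short five lemma is unavailable for this class of extensions, I would then check bijectivity of $\psi$ directly, but both directions are immediate. Injectivity is again just the description of $E$: if $\psi[n,h] = \psi[n',h']$ then $k(n)s(h) = k(n')s(h')$, hence $[n,h] = [n',h']$. Surjectivity follows from \cref{prop:semibiproducts}: given $x \in G$ there is an $n \in N$ with $x = k(n)se(x) = \psi[n,e(x)]$. Finally $\psi k'(n) = \psi[n,1] = k(n)s(1) = k(n)$ and $e\psi[n,h] = e\big(k(n)s(h)\big) = h = e'[n,h]$, so $\psi$ is an isomorphism of extensions. As $\zeta$ sends $\normalext{N}{k}{G}{e}{H}$ to the class of $g$ and $\rho$ sends $g$ to $\normalext{N}{k'}{(N\times H)/E^\phi_g}{e'}{H}$, this establishes $\rho\zeta = \id$.

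I do not expect a genuine obstacle: every step collapses to one of the three defining identities recalled above, plus the abelian-group and homomorphism bookkeeping already used throughout this section. The only point deserving care is that, because morphisms of cosetal (or weakly Schreier) extensions need not be isomorphisms, one must resist appealing to a five-lemma and instead confirm injectivity and surjectivity of $\psi$ by hand — which, as noted, is painless here since both reduce to facts already in place.
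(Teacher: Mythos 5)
Your proposal is correct and follows essentially the same route as the paper: the same map $[n,h]\mapsto k(n)s(h)$, with multiplicativity reduced to the factor set identity $kg(h,h')s(hh')=s(h)s(h')$ and the conjugation identity $k\phi(h,n')s(h)=s(h)k(n')$. The only difference is that you spell out well-definedness, injectivity, and surjectivity (via \cref{prop:semibiproducts}), which the paper dismisses as clear.
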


\begin{proof}
    Let $s$ be a section of $\normalext{N}{k}{G}{e}{H}$ and consider the map $f\colon (N \times H)/E^\phi_g \to G$ where $f([n,h]) = k(n)s(h)$. It is clear that this is a bijective map and preserves the identity. Let us show that it preserves the multiplication.
    \begin{align*}
        f([n,h][n',h])  &= f([n\phi(h,n')g(h,h'),hh']) \\
                        &= k(n)k\phi(h,n')kg(h,h')s(hh') \\
                        &= k(n)k\phi(h,n')s(h)s(h') \\
                        &= k(n)s(h)k(n')s(h') \\
                        &= f([n,h])f([n',h']).
    \end{align*}
    Now it only remains to show $fk' = k$ and $ef = e'$. For the first consider $fk(n) = f([n,1]) = k(n)s(1) = k(n)$. For the second $ef([n,h]) = e(k(n)s(h)) = h$.
\end{proof}

Thus, putting this together we obtain our main result.
\begin{theorem}
    The maps $\rho$ and $\zeta$ give an isomorphism between the set $\SWSExt(H,N,E,[\phi])$ and the abelian group $\mathcal{H}(H,N,E,[\phi])$.
\end{theorem}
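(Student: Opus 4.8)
The plan is to assemble the isomorphism from the pieces already established, transporting the abelian group structure of $\mathcal{H}(H,N,E,[\phi])$ across to $\SWSExt(H,N,E,[\phi])$ via the mutually inverse bijections $\rho$ and $\zeta$. First I would verify that $\rho$ and $\zeta$ are well defined on the stated domains. For $\zeta$ this amounts to checking that the class in $\mathcal{H}^2(H,N,E,[\phi])$ does not depend on the chosen section $s$: if $s$ and $s'$ are two sections, \Cref{cor:coset} supplies $t\colon H \to N$ with $s'(h) = kt(h)s(h)$, and a direct computation shows the associated factor sets differ by the inner factor set $\delta t$ (up to $F$), so they represent the same class. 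One also checks $\zeta$ respects isomorphism of extensions, since an isomorphism of cosetal extensions carries a section of one to a section of the other. For $\rho$, \Cref{prop:ext1} already shows the target is a genuine cosetal extension with the prescribed $E$ and $[\phi]$, and one notes that $F$-equivalent factor sets and factor sets differing by an inner factor set yield isomorphic extensions (the latter is precisely the penultimate proposition, the former follows because the multiplication on $(N \times H)/E^\phi_g$ only depends on the class $[g(h,h'),hh']$).

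Next I would invoke the two bijection lemmas directly. The last displayed proposition before the theorem shows $\rho\zeta = \id$ on $\SWSExt(H,N,E,[\phi])$, and the remark immediately preceding it records that $\zeta\rho = \id$ on $\mathcal{H}(H,N,E,[\phi])$ — this latter fact is essentially immediate, since feeding the extension $\normalext{N}{k}{(N \times H)/E^\phi_g}{e}{H}$ back through the extraction procedure recovers $E$, $[\phi]$ and a factor set that is $F$-equivalent to $g$ (using the section $s(h) = [1,h]$, for which $kg(h,h')s(hh') = s(h)s(h')$ holds on the nose). Hence $\rho$ and $\zeta$ are mutually inverse bijections of sets.

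It then remains to observe that $\rho$ is a group homomorphism when $\SWSExt(H,N,E,[\phi])$ is given the induced operation, i.e.\ that the operation transported from $\mathcal{F}(H,N,E,[\phi])$ (pointwise multiplication of factor sets, descended to $\mathcal{H}^2$) is well defined and turns the bijection into an isomorphism of abelian groups. Well-definedness on $\mathcal{H}^2$ is already in hand: $\mathcal{F^*}(H,N,E,[\phi])$ is an abelian group under pointwise multiplication, $F$ is a congruence, and $\mathcal{IF}(H,N,E,[\phi])$ is a subgroup, so the quotient $\mathcal{H}^2 = \mathcal{F}/\mathcal{IF}$ inherits an abelian group structure. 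Transporting this along the bijection $\zeta$ yields an abelian group structure on $\SWSExt(H,N,E,[\phi])$ with respect to which $\rho$ and $\zeta$ are inverse isomorphisms, which is the assertion; the neutral element is the class of the extension built from the trivial factor set $g \equiv 1$, i.e.\ the (weakly Schreier split) extension $\normalext{N}{k}{(N \times H)/E}{e}{H}$. I do not expect a substantive obstacle here: all the genuine content — that $\rho$ and $\zeta$ land in the right places, that they are mutually inverse, and that the factor sets form an abelian group with the stated subgroups — has been discharged by the preceding propositions, so this final step is bookkeeping. The one point requiring a line of care is confirming that the operation induced on isomorphism classes of extensions is intrinsic, i.e.\ independent of the choices of sections used to compute representing factor sets; but this is exactly the statement that $\zeta$ is well defined, already checked above.
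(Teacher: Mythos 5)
Your proposal is correct and follows essentially the same route as the paper, which simply assembles the preceding propositions (well-definedness of $\rho$ and $\zeta$, the identities $\zeta\rho = \id$ and $\rho\zeta = \id$, and the abelian group structure on $\mathcal{F}/\mathcal{IF}$) into the final statement. You in fact spell out the bookkeeping — independence of the chosen section, compatibility with $F$ and with inner factor sets — in more detail than the paper does.
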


Naturally, $\SWSExt(H,N,E,[\phi])$ inherits a multiplication through this isomorphism. It is this that we call the \emph{Baer sum}.

%
%
%

In a follow up paper we will explore the interplay between the cohomology groups $\mathcal{H}^2(H,N,E,[\phi])$ and the order structure of weakly Schreier extensions.

Further work could also be done studying cosetal extensions in full generality, without assuming that the kernel is an abelian group.

\subsection*{Acknowledgements}

I would like to thank Andrea Montoli, Nelson Martins-Ferreira and Graham Manuell for the conversations we had on this topic.

\bibliographystyle{abbrv}
\bibliography{bibliography}

\begin{thebibliography}{10}

\bibitem{billhardt1992wreath}
B.~Billhardt.
\newblock On a wreath product embedding and idempotent pure congruences on
  inverse semigroups.
\newblock In {\em Semigroup Forum}, volume~45, page~45. Springer, 1992.

\bibitem{faul2020lambda}
P.~Faul.
\newblock $\lambda$-semidirect products of inverse monoids are weakly
  {S}chreier extensions.
\newblock {\em arXiv preprint arXiv:2005.02508}, 2020.

\bibitem{faul2019characterization}
P.~F. Faul.
\newblock A characterization of weakly {S}chreier extensions of monoids.
\newblock {\em arXiv preprint arXiv:1911.02630}, 2019.

\bibitem{faul2019artin}
P.~F. Faul and G.~R. Manuell.
\newblock Artin glueings of frames as semidirect products.
\newblock {\em Journal of Pure and Applied Algebra}, 224(8):106334, 2020.

\bibitem{maclane2012homology}
S.~MacLane.
\newblock {\em Homology}.
\newblock Springer Science \& Business Media, 2012.

\bibitem{martins2020semi}
N.~Martins-Ferreira.
\newblock Semi-biproducts in monoids.
\newblock {\em arXiv preprint arXiv:2002.05985}, 2020.

\bibitem{martins2016baer}
N.~Martins-Ferreira, A.~Montoli, and M.~Sobral.
\newblock Baer sums of special {S}chreier extensions of monoids.
\newblock In {\em Semigroup Forum}, volume~93, pages 403--415. Springer, 2016.

\bibitem{patchkoria1977extensions}
A.~Patchkoria.
\newblock Extensions of semimodules by monoids and their cohomological
  characterization.
\newblock {\em Bull. Georgian Acad. Sci}, 86:21--24, 1977.

\bibitem{patchkoria1979schreier}
A.~Patchkoria.
\newblock {\em On {S}chreier extensions of semimodules}.
\newblock PhD thesis, Ph. D. Thesis, Tbilisi state University, 1979 (in
  Russian).

\bibitem{redei1952verallgemeinerung}
L.~Redei.
\newblock Die {V}erallgemeinerung der {S}chreierschen {E}rweiterungstheorie.
\newblock {\em Acta Sci. Math. Szeged}, 14(1):252--273, 1952 (in German).

\bibitem{tuen1976nonabelianextensions}
S.~Tuen, N.
\newblock Non-abelian extensions of monoids.
\newblock {\em Bull. Georgian Acad. Sci.}, 84:37--39, 1976 (in Russian).

\bibitem{wraith1974glueing}
G.~Wraith.
\newblock Artin glueing.
\newblock {\em J. Pure Appl. Algebra}, 4(3):345--348, 1974.

\end{thebibliography}
\end{document}